\pgfplotsset{compat=1.16}
\theoremstyle{plain}
\newtheorem{thm}{Theorem}[section]
\theoremstyle{plain}
\newtheorem*{thm*}{Theorem}
\theoremstyle{plain}
\newtheorem{prop}[thm]{Proposition}
\theoremstyle{plain}
\theoremstyle{plain}
\newtheorem{cor}[thm]{Corollary}
\theoremstyle{remark}
\newtheorem{conj}{Conjecture}[section]
\theoremstyle{remark}
\newtheorem*{rem*}{Remark}
\theoremstyle{remark}
\newtheorem{defi}{Definition}[section]
\begin{document}

\title{Demographic feedbacks can hamper the spatial spread of a gene drive}

\author{L\'{e}o Girardin}
\address[L.G.]{CNRS, Institut Camille Jordan, Universit\'{e} Claude Bernard Lyon-1, 43 boulevard du 11 novembre 1918, 69622 Villeurbanne Cedex, France}
\email{leo.girardin@math.cnrs.fr}

\author{Florence D\'{e}barre}
\address[F. D.]{CNRS, Sorbonne Universit\'{e}, Univ. Paris Est Creteil, INRAE, IRD, Institute of Ecology and Environmental sciences - Paris, IEES-Paris, 4 place Jussieu, 75005 Paris, France}
\email{florence.debarre@normalesup.org}

\begin{abstract}
    This paper is concerned with a reaction--diffusion system modeling the fixation and the invasion in a population of a gene drive 
    (an allele biasing inheritance, increasing its own transmission to offspring). 
    In our model, the gene drive has a negative effect on the fitness of individuals carrying it, and is therefore susceptible of decreasing the total carrying capacity of the population locally in space. This tends to generate an opposing demographic advection that the gene drive has to overcome in order to invade. While previous reaction--diffusion models neglected this aspect, here we focus on it and try to predict the sign of the traveling wave speed. It turns out to be an analytical challenge, only partial results being within reach, and we complete our theoretical analysis by numerical simulations. Our results indicate that taking into account the interplay between population dynamics and population genetics might actually be crucial, as it can effectively reverse the direction of the invasion and lead to failure. Our findings can be extended to other bistable systems, such as the spread of cytoplasmic incompatibilities caused by Wolbachia.
\end{abstract}

\keywords{population genetics, population dynamics, gene drive, traveling wave, bistability.}
\subjclass[2010]{35K57, 92D10, 92D25.}

\maketitle

\section{Introduction}
\subsection{Biological background}

Some genetic elements can bias inheritance in their favor, and therefore spread in a population over successive generations, even if they are costly to the organism carrying them \cite{werren_selfish_2011}, a phenomenon called gene drive \cite{alphey_opinion_2020}. Inspired from natural systems, synthetic gene drives have been developed over the last decades for the control of natural populations \cite{sinkins_gene_2006}. Artificial gene drives can be used to spread a genetic modification in a natural population \cite{champer_cheating_2016}, in order to \begin{enumerate*}[label=\textit{(\roman*)}]\item modify this target population (for instance, make disease vectors resistant to the pathogen that they transmit) without significantly affecting its size (replacement drives), or \item reduce the size of the target population by spreading an allele that affects fecundity or survival (suppression drives), or even eradicate the target population (eradication drives) \cite{Dhole_Lloyd_Gould_2020}\end{enumerate*}. 

One way of biasing transmission involves copying the drive element onto the other chromosome in heterozygous cells \cite{burt_site-specific_2003}, a process called gene conversion or ``homing''. A target sequence on the other chromosome is recognized, cleaved, and a DNA-repair mechanism using the intact chromosome as a template leads to the duplication of the drive element. The drive element is then present in most gametes and therefore in most offspring.  
The high flexibility and programmability brought about by the CRISPR-Cas genome editing technology greatly simplifies the creation of this type of artificial gene drives \cite{Esvelt_2014}, and has led to a renewed interest in this technique of population control.  

To date, artificial gene drives have been confined to laboratory experiments; field trials, whether small-scale and confined, or open, have not taken place yet \cite{efsa_panelon_genetically_modified_organisms_gmo_adequacy_2020}. In between, mathematical models can help identify features of gene drives that are most important in determining their dynamics; models can also help anticipate potential issues, and are an essential step between lab and field experiments \cite{NASEM_2016}. 

A key feature affecting the spread of gene drives is the existence or not of a release threshold. Some drive systems spread from arbitrary low introduction frequencies (``thres\-hold-independent drives''), while some others only do if enough drive-carrying individuals are introduced in the population (``threshold-dependent drives'' or ``high-threshold drives'') \cite[p19]{efsa_panelon_genetically_modified_organisms_gmo_adequacy_2020}. In mathematical terms, a release threshold corresponds to the existence of an unstable internal equilibrium (\textit{i.e.}, where wild-type and drive alleles coexist), in other words, of a bistability. Homing-based gene drives can belong to the two categories, depending on their intrinsic parameters \cite{Deredec_2008}. 

The existence of a release threshold affects the spatial spread of a drive and the potential for spatial confinement. High-threshold drives can be spatially confined if dispersal is limited \cite{altrock_using_2010, marshall_confinement_2012, greenbaum_designing_2019}. In spatially continuous environment, the shape of the wave of advance of a drive system changes from pulled for no-threshold drives to pushed in high-threshold drives \cite{Dhole_Lloyd_Gould_2020}. A wave of advance of a high-threshold drive can be stopped by obstacles \cite{Tanaka_Stone_N}. More generally, these results are in line with those of models of spatial spread of bistable systems \cite{barton_dynamics_1979, Barton_Turelli}.       

Many mathematical models of the spatial spread of gene drives focus on allele frequencies and ignore changes in population density. Changes in the size of the target population will however happen with the spread of a gene drive, either as a potential side effect for replacement drives, or as a feature for suppression and eradication drives. Considering these spatial and temporal variations in population size matters, because they create fluxes of individuals from more densely to less densely populated locations, opposing the spread of the gene drive \cite{Dhole_Lloyd_Gould_2020}. 

Beaghton \textit{et al.} \cite{beaghton_gene_2016} used a reaction--diffusion framework to study the spatial spread of a driving-Y chromosome causing population suppression or eradication, and explicitly followed population densities. In a driving-Y system, the offspring of driving-Y bearing males are almost all driving-Y bearing males, because the development of X-chromosome bearing gametes is disrupted. A driving-Y is a no-threshold drive: in a well-mixed population, a driving-Y system can increase from arbitrary low frequencies. Beaghton \textit{et al.} showed that a driving-Y system would spread spatially, and they calculated the speed of the wave of advance.

In this article, we use a system of partial differential equations to study the spread of a homing-based gene drive over a one-dimensional space, explicitly taking into account changes in population sizes. In our model, the fitness cost $s$ associated to the drive affects the level of population suppression, but also the existence and value of the release threshold. We explore the model numerically and prove some results mathematically for large regions of the parameter space. We explore the robustness of our finding by considering other forms of density dependence (including Allee effects), assuming fitness effects on other fitness components, and finally extend our result to another bistable system, the spread of cytoplasmic incompatibilities brought about by \textit{Wolbachia}.

%- Features of different gene drives - our work centered on CRISPR gene drives, and we will include Wolbachia example. Notion of threshold dependence. Built-in for some drives; dependent on the parameters for some others like homing drives. 

%- Spatial spread of threshold-dependent drives differs from that of non-threshold dep drives. (Pulled pushed??). Argument that thr dep drive are more amenable to control, can be stopped. Tanaka et al. showed\ldots role of obstacles. Conclusions in line with other models of spatial spread of bistable systems, e.g. Barton. 

%- These models however did not explicitly consider population dynamics, focus on allele frequencies. Drives will alter pop sizes: as potential side effect for replacement drives, as feature for suppression and eradication drives. It matters for spatial spread because advection(??) flux of individuals from more densely to less populated areas impacts the effects of local selection. All the more important in the case of suppression drives, where much lower population sizes once the drive has invaded (ref review AnnRev). 

%- We use a system of partial differential equations to study the spread of a gene drive over a one-dimensional space, taking into account changes in population sizes. We explore the model numerically, prove the results mathematically for large regions of the parameter space, and complement our analysis with stochastic simulations. We find that 

\subsection{Organization of the paper}
In the rest of Section 1, we present our mathematical model, relate it to the literature,
and state our results. In Section 2, we explain our
numerical method and prove our analytical results. In Section 3, we conclude with a mathematical and biological discussion.

\subsection{Derivation of the reaction--diffusion system}

To derive our equations, we follow the methodology presented in \cite{Rode_Debarre_2020}. In addition, we assume that heterozygous individuals are functionally equivalent to drive homozygotes. This can be the case when the conversion rate is $100\%$, \textit{and} either \begin{enumerate*}[label=\textit{(\roman*)}]\item gene conversion (also called homing) takes place early in development (typically in the zygote -- as opposed to gene conversion in the germline) -- so that heterozygotes are all immediately converted into homozygotes, and no heterozygous individuals are ever introduced to the population from an outside source, or \item gene conversion takes place later in the life cycle, but the drive is dominant, so that heterozygotes have the same fitness as drive homozygotes (and then only transmit the drive allele, since gene conversion is $100\%$)\end{enumerate*}. This assumption simplifies a lot the model since we have to track two genotypes instead of three in general: in scenario \textit{i}), there are never heterozygous individuals in the population, and in scenarion \textit{ii}), heterozygotes can be conflated with drive homozygotes since they behave exactly the same way. 
This leads to the following reaction--diffusion system:
\begin{equation}
    \begin{cases}
	\partial_t n_D -\sigma_D\Delta n_D = \omega_{D}n_D\beta_{D}B(n_D+n_O)\frac{2\beta_{O}n_O+\beta_{D}n_D}{n_D+n_O}-d_{D}D(n_D+n_O)n_D, \\
	\partial_t n_O -\sigma_O\Delta n_O = \omega_{O}n_O\beta_{O}B(n_D+n_O)\frac{\beta_{O}n_O}{n_D+n_O}-d_{O}D(n_D+n_O)n_O,
    \end{cases}
    \label{sys:initial_full_system}
\end{equation}
where: 
\begin{itemize}
    \item $n_D(t,x)$ and $n_O(t,x)$, nonnegative functions of time $t$ and space $x$,
	are (continuous densities approximating) the number of alleles $D$ and $O$ respectively, 
	or (up to a factor $1/2$) the number of homozygous individuals $DD$ and $OO$ respectively;
    \item for each allele $a\in\{D,O\}$, $\sigma_a$ is the spatial diffusion rate of homozygous $aa$ individuals, $\omega_a$ is
	their juvenile survival rate (the proportion of newborns that survive until the adult age), $\beta_a$ is the 
	fecundity rate of a gamete carrying $a$ (fecundity being defined here multiplicatively in the sense that a mating between $aa$ and $AA$ individuals will produce $\beta_a \beta_A$ newborns), $d_a$ is the death rate of $aa$ individuals, and all these parameters
	are positive constants;
    \item $B$ and $D$, nonnegative functions of the total population size, are the per capita birth rate and death rate. 
	If they were positive constants, then a pure wild-type population would undergo exponential growth, which
	is not realistic; by using non-constant, \textit{a priori} nonlinear, functions, we can model more complex 
	population dynamics, like logistic growth or Allee effects.
\end{itemize}

Our assumptions of early and $100\%$ successful gene conversion (homing) are reflected in the presence of the $+2 \beta_{O}n_O$ term in the first equation of system~\eqref{sys:initial_full_system}; assuming late gene conversion and a dominant drive allele, together with $100\%$ gene conversion, results in the same equations.

Defining
\begin{equation*}
    \tilde{B}=\omega_O\beta_O^2 B,\ \tilde{D}=d_O D,\ \tilde{\omega}_D=\omega_D/\omega_O,\ \tilde{\beta}_D=\beta_D/\beta_O,\ 
    \tilde{d}_D=d_D/d_O,
\end{equation*}
we obtain the reduced system:
\begin{equation*}
    \begin{cases}
	\partial_t n_D -\sigma_D\Delta n_D = n_D\left( \tilde{\omega}_{D}\tilde{\beta}_{D}\tilde{B}(n_D+n_O)\frac{2n_O+\tilde{\beta}_{D}n_D}{n_D+n_O}-\tilde{d}_{D}\tilde{D}(n_D+n_O) \right), \\
	\partial_t n_O -\sigma_O\Delta n_O = n_O\left( \tilde{B}(n_D+n_O)\frac{n_O}{n_D+n_O}-\tilde{D}(n_D+n_O) \right).
    \end{cases}
\end{equation*}
Assuming subsequently that selection does not act on mobility ($\sigma_D=\sigma_O$), changing the variable $x$ into 
$\tilde{x}=x/\sqrt{\sigma_O}$, and getting rid of all ``$\tilde{\phantom{e}}$'' for ease of reading, we obtain the reduced system:
\begin{equation*}
    \begin{cases}
	\partial_t n_D -\Delta n_D = n_D\left( \omega_{D}\beta_{D}B(n_D+n_O)\frac{2n_O+\beta_{D}n_D}{n_D+n_O}-d_{D}D(n_D+n_O) \right), \\
	\partial_t n_O -\Delta n_O = n_O\left( B(n_D+n_O)\frac{n_O}{n_D+n_O}-D(n_D+n_O) \right).
    \end{cases}
\end{equation*}
The biological meaning of the parameters in this simpler system is the following: $\omega_D$, $\beta_D$, $d_D$ are multiplicative
variations to the ``norm'' fixed by $OO$ individuals. For instance, if $\beta_D=1/2$, then wild-type homozygotes ($OO$) are twice more fecund than drive homozygotes ($DD$).

Finally, when selection only acts on survival with a \textit{fitness cost} $s\in[0,1]$ ($d_D=\beta_D=1$, $\omega_D=1-s$, standing 
assumptions from now on), the system reads:
\begin{equation}
    \begin{cases}
	\partial_t n_D -\Delta n_D = n_D\left( (1-s)\left( 1+\frac{n_O}{n_D+n_O} \right)B(n_D+n_O)-D(n_D+n_O) \right), \\
	\partial_t n_O -\Delta n_O = n_O\left( \frac{n_O}{n_D+n_O}B(n_D+n_O)-D(n_D+n_O) \right).
    \end{cases}
    \label{sys:full_densities}
\end{equation}

Defining $n(t,x)=n_D(t,x)+n_O(t,x)$ the total population density and $p(t,x)=\frac{n_D(t,x)}{n(t,x)}$ the proportion of allele $D$, 
we find that the equivalent system satisfied by $(p,n)$ is:
\begin{equation}
    \begin{cases}
	\partial_t p -\Delta p -2\nabla\left( \log n \right)\cdot\nabla p=B(n)ps(1-p)\left(p-\frac{2s-1}{s}\right), \\
	\partial_t n -\Delta n=n\left( \left( 1-s+s(1-p)^2 \right)B(n)-D(n) \right).
    \end{cases}
    \label{sys:full_proportion}
\end{equation}
In the above system and in the whole paper, $\log$ is the natural logarithm.

$B(n)$ and $D(n)$ are still unspecified at this point.

The equation for $p$ in system~\eqref{sys:full_proportion} has a non-trivial equilibrium $\theta = \frac{2 s - 1}{s}$, which is admissible when $\frac{1}{2} \leq s \leq 1$. In this case, this internal equilibrium is unstable. The release-threshold properties of the gene drive are therefore given by the value of $s$: the drive is threshold-independent when $0\leq s \leq 1/2$, and threshold dependent when $1/2 < s \leq 1$, the value of the release threshold being $\theta$. 

\subsection{Carrying capacity and gene drive typology}

A pure wild-type population is governed by the equation $\partial_t n_O -\Delta n_O=n_O(B(n_O)-D(n_O))$. Hence
$B-D$ can be understood as the wild-type intrinsic growth rate per capita. 
Subsequently, we define the \textit{wild-type carrying capacity} (\textit{i.e.}, equilibrium population size) as follows:
\begin{itemize}
    \item if $B(n)-D(n)<0$ for any $n\geq 0$, then the carrying capacity is $0$;
    \item otherwise, the carrying capacity is the maximal value of $n\geq 0$ for which $B(n)-D(n)\geq 0$.
\end{itemize}
For well-posedness purposes, we assume that the carrying capacity is finite, and without loss of generality we assume that it is $1$, fixing
the unit of population density\footnote{For any $K>0$, the pair $(n_D,n_O)$ with birth and death rates $B$ and $D$ satisfies the same system as the pair $({n}^K_D,{n}^K_O)=(n_D/K,n_O/K)$ with birth and death rates ${B}^K(n)=B(Kn)$, ${D}^K(n)=D(Kn)$.
Hence the normalization can be done without loss of generality and all forthcoming results do not depend on the wild-type 
carrying capacity.}.

In a spatially homogeneous setting, a population goes extinct if, and only if, its carrying capacity is zero. The growth rate of a pure $DD$ population is $(1-s)B(n_D)-D(n_D)$. Hence, in our framework,
\begin{itemize}
    \item $s=0$ corresponds to pure, costless, replacement drives;
    \item as soon as $s>0$, the drive is a suppression drive, lowering the carrying capacity of the population;
    \item the necessary and sufficient condition for the drive to be an eradication drive, namely a drive whose carrying
	capacity is $0$, is:
\begin{equation*}
    \max_{n\in[0,1]}\left( (1-s)B(n)-D(n) \right)<0.
\end{equation*}
\end{itemize}

Finally, a simple rescaling of space and time shows that the pair $(B,D)$ and the pair $(\alpha B,\alpha D)$, for any $\alpha>0$,
lead to the same system.
In order to fix the ideas, in what follows, the pair $(B,D)$ is normalized by the standing assumption $\min_{n\in[0,1]} D(n) =1$.

\subsection{Relations with other models from the literature}

In the recent literature, reaction--diffusion models related to system~\eqref{sys:full_proportion_nonoverlapping_gen} have attracted a special attention, be it for the study of gene drive or in other evolution or population genetics contexts \footnote{The shared history between reaction--diffusion PDEs and population genetics is ancient: we remind the reader that both Fisher \cite{Fisher_1937} and Kolmogorov, Petrovsky and Piskunov \cite{KPP_1937} introduced the equation that is now famously called the Fisher--KPP equation as a population genetics model.}. Let us show how our model relates to some of those earlier models.

Fixing in the following discussion $D(n)=1$, system~\eqref{sys:full_proportion_nonoverlapping_gen} becomes:
\begin{equation}
    \begin{cases}
	\partial_t p -\Delta p -2\nabla\left( \log n \right)\cdot\nabla p=B(n)ps(1-p)\left(p-\frac{2s-1}{s}\right), \\
	\partial_t n -\Delta n=n\left( \left( 1-s+s(1-p)^2 \right)B(n)-1 \right).
    \end{cases}
    \label{sys:full_proportion_nonoverlapping_gen}
\end{equation}

\subsubsection{Deriving ref. \cite{Nadin_Strugarek_Vauchelet, Girardin_Calvez_Debarre, Tanaka_Stone_N} from system~\eqref{sys:full_proportion_nonoverlapping_gen}\label{sec:other_models}}

Recall that in this discussion, $D(n)=1$.

If $B(n)$ is a positive constant (population subjected to pure, unrealistic, Malthusian growth), then up to a rescaling of $(t,x)$, 
the equation on $p$ reads
\begin{equation}
    \partial_t p -\Delta p -2\nabla\left( \log n \right)\cdot\nabla p = sp(1-p)\left(p-\frac{2s-1}{s}\right).
    \label{eq:no_dens_dep}
\end{equation}
Together with V. Calvez, we studied this equation in a gene drive context \cite[Section 4.3]{Girardin_Calvez_Debarre} without 
\textit{a priori} knowledge on $n$. It has also been studied in a mosquito--Wolbachia context by Nadin, Strugarek and Vauchelet 
in \cite{Nadin_Strugarek_Vauchelet}.

Back to \eqref{sys:full_proportion_nonoverlapping_gen} with a non-constant function $B$, recalling that the wild-type carrying capacity
is $1$ (namely, $B(1)=1$), we consider small variations about $n=1$ (a replacement drive) by defining a ``rescaled total population'' 
$n_\varepsilon = \frac{1}{\varepsilon}(1-n)$ (\textit{i.e.}, $n=1-\varepsilon n_\varepsilon$) and, following Strugarek--Vauchelet
\cite{Strugarek_Vauchelet}, we find
\begin{equation*}
    \begin{cases}
	\partial_t p -\Delta p +2\varepsilon\frac{\nabla n_\varepsilon}{1-\varepsilon n_\varepsilon}\cdot\nabla p=B(1-\varepsilon n_\varepsilon)ps(1-p)\left(p-\frac{2s-1}{s}\right), \\
	\varepsilon\left( \partial_t n_\varepsilon -\Delta n_\varepsilon \right)=-\left( 1-\varepsilon n_\varepsilon \right)\left( \left( 1-s+s(1-p)^2 \right)B(1-\varepsilon n_\varepsilon)-1 \right).
    \end{cases}
\end{equation*}

Formally, if $\varepsilon\to 0$ with the scalings $1-\varepsilon n_\varepsilon\not\to 0$, $\varepsilon \partial_t n_\varepsilon\to 0$,
$\varepsilon\nabla n_\varepsilon\to 0$, $\varepsilon \Delta n_\varepsilon\to 0$, this system becomes
\begin{equation*}
    \begin{cases}
	\displaystyle \lim_{\varepsilon\to 0}B(1-\varepsilon n_\varepsilon)=\frac{1}{1-s+s(1-p)^2},\medskip\\
	\displaystyle \partial_t p -\Delta p = \frac{sp(1-p)\left(p-\frac{2s-1}{s}\right)}{1-s+s(1-p)^2}.
    \end{cases}
\end{equation*}
This limit can actually be made rigorous \cite{Strugarek_Vauchelet} provided:
\begin{itemize}
    \item the birth rate $B=B_\varepsilon$ depends on the small parameter $\varepsilon$ in such a way that 
    $B_\varepsilon(1-\varepsilon n_\varepsilon)$ can be rewritten as a function of $n_\varepsilon$ only, \textit{i.e.}, 
    $B_\varepsilon(1-\varepsilon n_\varepsilon)=\tilde{B}(n_\varepsilon)$;
    \item the time horizon is finite, or in other words, we are only concerned with the early gene drive dynamics, and not with its 
    long-time asymptotic spreading.
\end{itemize}

It turns out that the latter equation on $p$ is exactly the equation studied in a gene drive spreading context by Tanaka--Stone--Nelson
\cite{Tanaka_Stone_N}:
    \begin{equation}
	\partial_t p -\Delta p = \frac{sp(1-p)\left(p-\frac{2s-1}{s}\right)}{1-s+s(1-p)^2}.
	\label{eq:TSN}
    \end{equation}
Although Tanaka--Stone--Nelson's paper \cite{Tanaka_Stone_N} and Strugarek--Vauchelet's paper \cite{Strugarek_Vauchelet} were published
the same year and without direct knowledge one from another, they can therefore be related \textit{a posteriori}: our general
model reduces to Tanaka--Stone--Nelson's model via Strugarek--Vauchelet's limiting procedure in a context of replacement drives invading 
populations at carrying capacity, and Tanaka--Stone--Nelson's model can be used to study spreading properties provided the 
Strugarek--Vauchelet limiting procedure remains valid with an infinite time horizon. 
In this regard, our paper can be understood as an investigation of the validity of the approximation \eqref{eq:TSN} to study spreading
properties and for suppression or eradication drives.

Back to system~\eqref{sys:full_proportion_nonoverlapping_gen}, it is formally clear that, under a weak selection assumption $s\simeq 0$,
the total population $n$ does not depend a lot on the allelic frequency $p$, is close to the wild-type carrying capacity $1$ 
everywhere, and by virtue of $B(1)=1$, the equation on $p$ can be approximated by
\begin{equation}
    \partial_t p -\Delta p = sp(1-p)\left(p-\frac{2s-1}{s}\right).
    \label{eq:no_dens_dep_at_all}
\end{equation}
This equation is both an approximation of \eqref{eq:no_dens_dep} with $n\simeq 1$ and an approximation of \eqref{eq:TSN}
with $s\simeq 0$. This formal reasoning illustrates in another way how our model is consistent with the model of 
Tanaka--Stone--Nelson \cite{Tanaka_Stone_N} under a weak selection assumption \footnote{It was precisely the aim of Strugarek and 
Vauchelet \cite{Strugarek_Vauchelet} to make such formal statements rigorous.}.

Finally, another variant might be
\begin{equation}
    \partial_t p -\Delta p -2\nabla\left( \log n \right)\cdot\nabla p = \frac{sp(1-p)\left(p-\frac{2s-1}{s}\right)}{1-s+s(1-p)^2}.
    \label{eq:GCD}
\end{equation}
The difference with eq.~\eqref{eq:TSN} is the presence of the advection term $2\nabla\left( \log n \right)\cdot\nabla p$. 
In our paper with V. Calvez \cite{Girardin_Calvez_Debarre}, we also studied equation~\eqref{eq:GCD}, without \textit{a priori}
knowledge on $n$. It is not clear how this equation can be deduced from our current model and we leave this as an open problem -- yet,
in the next section we will explain how all these models descend from a unique discrete-time model.
In any case, our intent there was to
depart from the Tanaka--Stone--Nelson model and to introduce basic population density effects by means of this advection term. 
As such, this first work motivated the present one and this modeling clarification as well.

\subsubsection{Deriving all models from a unique discrete-time one}
Alternatively, we can derive all these models from a discrete-time model with non-overlapping generations \cite{Vella_2017, Deredec_2008, Unckless_2015}.

To obtain \eqref{sys:full_proportion_nonoverlapping_gen}, the operations have to be performed in the following order: 
\begin{enumerate}
    \item write the equations for $n_D(t+1)$ and $n_O(t+1)$;
    \item subtract from it $n_D(t)$ and $n_O(t)$ respectively and perform a first-order Taylor expansion $n_{D,O}(t+1)-n_{D,O}(t)\simeq
	\partial_t n_{D,O}$;
    \item add spatial diffusion to the equations;
    \item then write the equivalent system satisfied by $(p,n)$; it is \eqref{sys:full_proportion_nonoverlapping_gen} indeed.
\end{enumerate}

Going back to refs.~\cite{Tanaka_Stone_N,Girardin_Calvez_Debarre}, 
we find that there the operations are performed in the following different order: 
\begin{enumerate}
    \item write the equations for $n_D(t+1)$ and $n_O(t+1)$;
    \item write the equivalent system satisfied by $(p(t+1),n(t+1))$;
    \item by Taylor expansion, write the continuous-time system satisfied by $(p,n)$;
    \item add spatial diffusion (with \cite{Girardin_Calvez_Debarre} or without \cite{Tanaka_Stone_N} the gene flow advection term).
\end{enumerate}
Although the obtained equation on $n$ remains the same as in \eqref{sys:full_proportion_nonoverlapping_gen}, 
the equation on $p$ differs and is \eqref{eq:TSN} or \eqref{eq:GCD} (depending on whether the advection term
was accounted for).

Let us highlight the fact that the two processes above lead to different systems and are not equivalent. 
Nevertheless, we pointed out in \cite[Section 4.3]{Girardin_Calvez_Debarre} that the two systems have similar qualitative behaviors.

\subsection{Main results}

In what follows, we choose to focus, for the sake of exposition, on the simplest non-constant choice for $B$ and $D$, 
namely a constant death rate with logistic wild-type growth: $D(n)=1$, $B(n)=r(1-n)+1$. The parameter $r>0$ is the 
intrinsic growth rate of a wild-type population, or \textit{wild-type intrinsic growth rate} for short 
(in the literature,
intrinsic growth rates are also sometimes known as Malthusian growth rates). 
Thus we have two parameters in the model, the fitness cost $s$ and the wild-type intrinsic growth rate $r$.
We will discuss more complicated choices in Section \ref{sec:discussion}.

The reaction--diffusion systems are:
\begin{equation}\label{sys:dens_logistic}
    \begin{cases}
	\partial_t n_D -\Delta n_D = n_D\left( (1-s)\left( 1+\frac{n_O}{n_D+n_O} \right)(r(1-n_D-n_O)+1)-1 \right), \\
	\partial_t n_O -\Delta n_O = n_O\left( \frac{n_O}{n_D+n_O}(r(1-n_D-n_O)+1)-1 \right),
    \end{cases}
\end{equation}
\begin{equation}\label{sys:freq_logistic}
    \begin{cases}
	\partial_t p -\Delta p -2\nabla\left( \log n \right)\cdot\nabla p=(r(1-n)+1)sp(1-p)\left(p-\frac{2s-1}{s}\right), \\
	\partial_t n -\Delta n=n\left( \left( 1-s+s(1-p)^2 \right)(r(1-n)+1)-1 \right).
    \end{cases}
\end{equation}

For these systems, the eradication condition is $r<\frac{s}{1-s}$.

Since we are concerned with spreading properties, and more precisely with the ability of the gene drive to invade a wild-type 
population when introduced in sufficiently high numbers in a confined area of space,
we neglect boundary effects, assume that the physical space is a Euclidean space
-- to simplify even further, we assume it is the
one-dimensional Euclidean space $\mathbb{R}$ -- and assume that the initial conditions are close to $n(0,x)=1$ everywhere
and $p(0,x)=1$ in a sufficiently large compact interval.

\subsubsection{Numerical results}

All numerical simulations are performed in \textit{GNU Octave} \cite{Octave} using standard implicit finite difference schemes. 
Two examples of numerical code, that can be used to obtain Figure \ref{fig:time_evolution} and Figure \ref{fig:heatmap_large_r_++},
are presented online at \url{\urlrepo}.

To begin with, we simulate the evolution in time of the population densities $n_O(t,x)$ and $n_D(t,x)$.

The spatial domain $\mathbb{R}$ is, as usual, approximated by a very large bounded domain with Neumann boundary conditions.
Our initial condition is such that $(n_D,n_O)=(0,1)$ on the left and $(n_D,n_O)=(0.1,0.9)$ on the right.

Time snapshots of such a simulation with $r=10/9$ and $s=1/2$ are presented in Figure~\ref{fig:time_evolution}. 
We clearly observe the rapid convergence of the solution to a \textit{traveling wave}, namely to a solution 
$(n_D,n_O)(t,x)=(N_D,N_O)(x-ct)$ with constant profile and constant wave speed $c<0$. 
From now on, we use capital letters $N_D$, $N_O$, $N$ and $P$ for the wave profiles, functions of $x-ct$, 
and lower-case letters $n_D$, $n_O$, $n$ and $p$ for the population densities, functions of $(t,x)$.

Note that by symmetry and isotropy, we would obtain an axis-symmetric figure if the gene drive was introduced on  the left. If it was introduced in the center of a twice as large interval, we would observe the gluing of two traveling waves, one spreading towards the left at speed $c<0$ and the other spreading towards the right at speed $-c>0$.

With this choice of parameters $r$ and $s$, we observe the so-called \textit{hair-trigger effect}, namely the fact that no matter how small the height and the width of the initial $n_D$, invasion occurs. This is a typical property of monostable dynamics, that fails with 
bistable dynamics. These lead on the contrary to what is referred to as \textit{threshold properties}. 

\begin{figure}
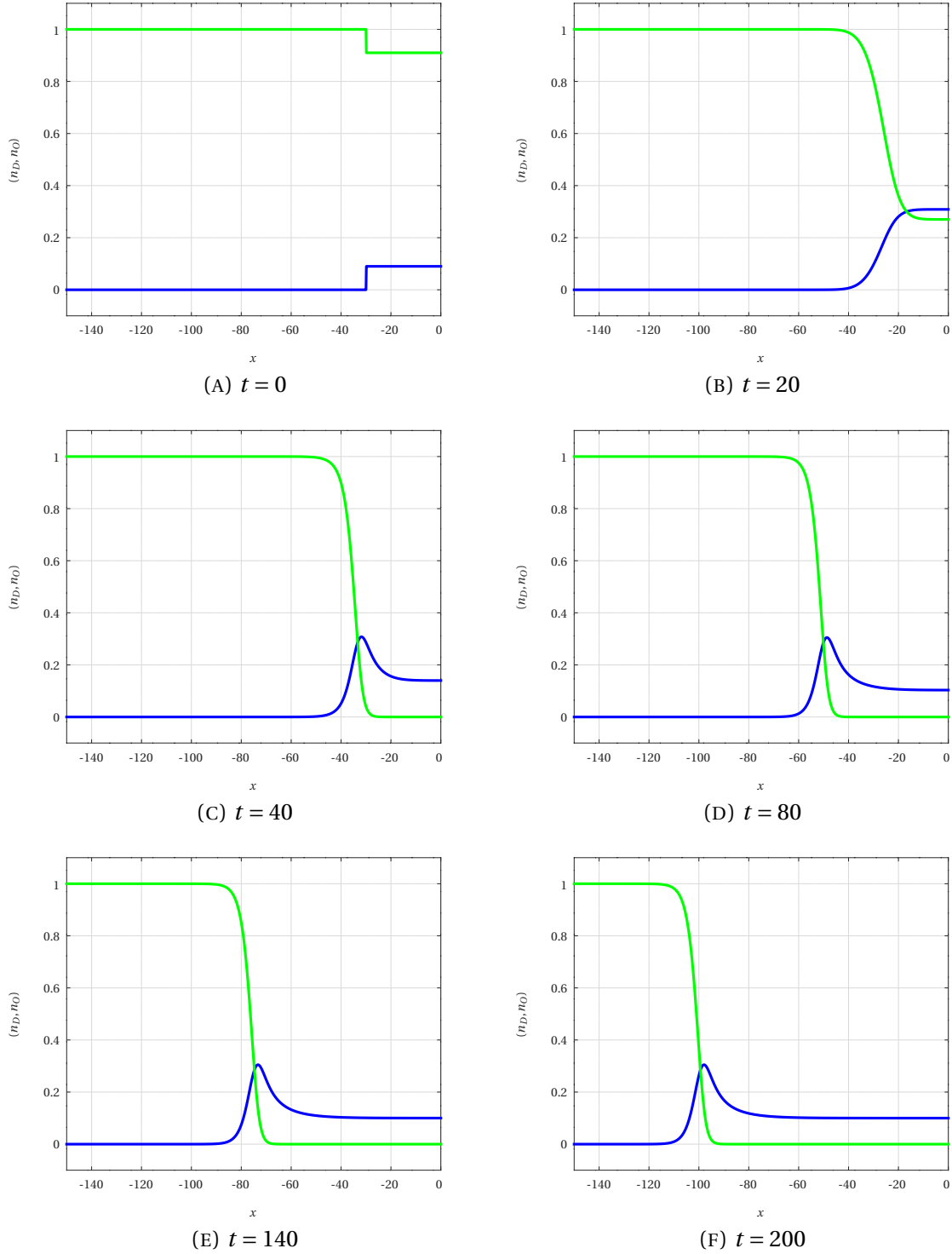

    \begin{subfigure}{.49\textwidth}
        \resizebox{\linewidth}{!}{\input{evolution_dens_0.tex}}
        \caption{$t=0$}
    \end{subfigure}
    \hfill
    \begin{subfigure}{.49\textwidth}
        \resizebox{\linewidth}{!}{\input{evolution_dens_1.tex}}
        \caption{$t=20$}
    \end{subfigure}
    
    \begin{subfigure}{.49\textwidth}
        \resizebox{\linewidth}{!}{\input{evolution_dens_2.tex}}
        \caption{$t=40$}
    \end{subfigure}
    \hfill
    \begin{subfigure}{.49\textwidth}
        \resizebox{\linewidth}{!}{\input{evolution_dens_4.tex}}
        \caption{$t=80$}
    \end{subfigure}
    
    \begin{subfigure}{.49\textwidth}
        \resizebox{\linewidth}{!}{\input{evolution_dens_7.tex}}
        \caption{$t=140$}
    \end{subfigure}
    \hfill
    \begin{subfigure}{.49\textwidth}
        \resizebox{\linewidth}{!}{\input{evolution_dens_10.tex}}
        \caption{$t=200$}
    \end{subfigure}
    \caption{Numerical simulation of the solution of the system \eqref{sys:dens_logistic} at different times (varying time between two snapshots). 
    Blue curve: $n_{D}(t,x)$. Green curve: $n_{O}(t,x)$. Here $r=10/9$, $s=0.5$, so that the drive is not
    an eradication drive ($1-s/(r(1-s))=0.1$) and it is threshold-independent ($(2s-1)/s=0$).}
    \label{fig:time_evolution}
\end{figure}

\begin{figure}
    \resizebox{\textwidth}{!}{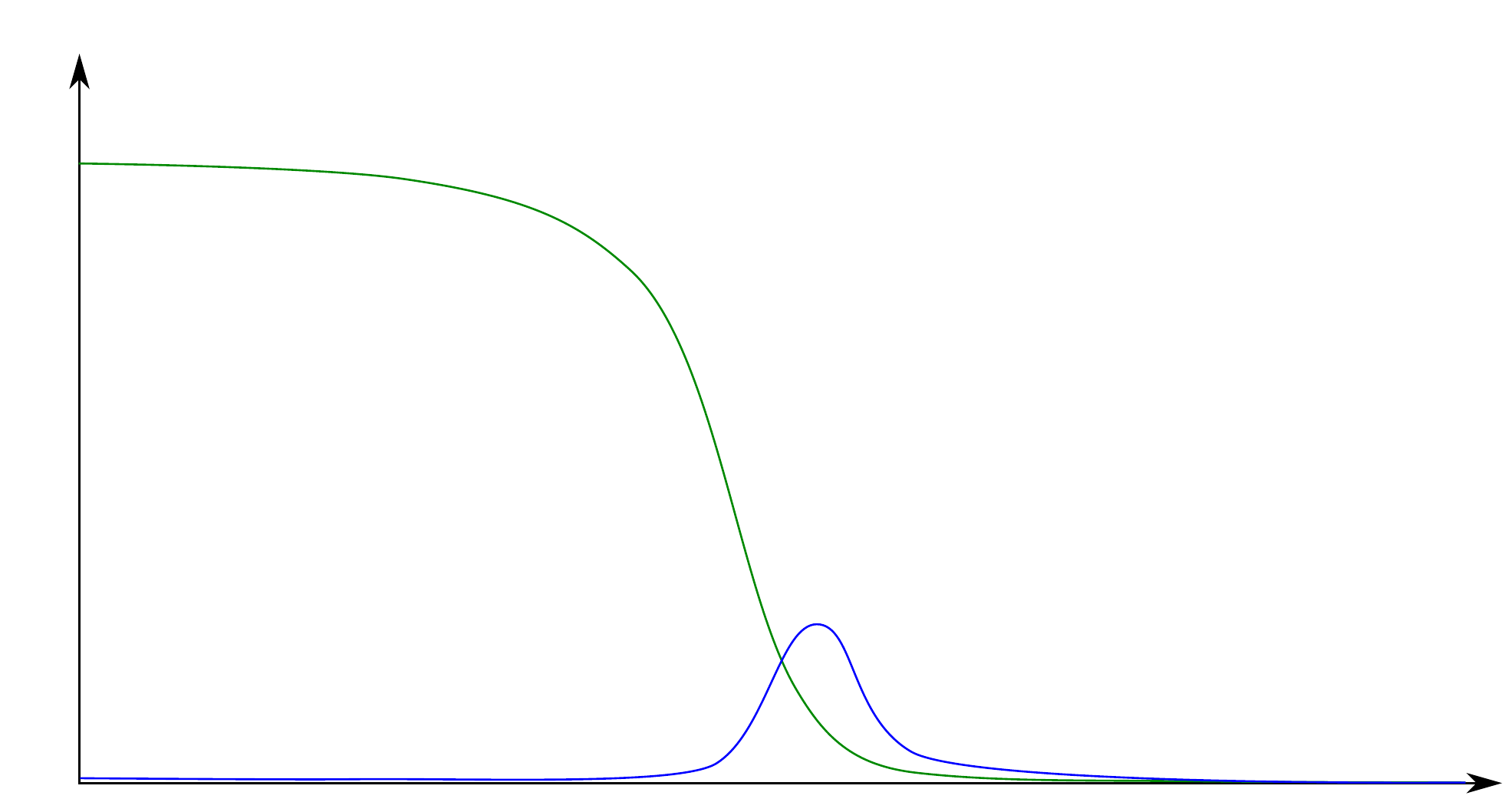}
    \caption{Illustration of a possible wave profile in the correct moving frame for an eradication drive. Three biological 
    mechanisms can influence the wave speed; finding the sign of the wave speed is studying their interplay.}
    \label{fig:wave_profile_illu}
\end{figure}

Next, we let the parameters $r$ and $s$ vary. By doing so, we observe both a monostable regime, with hair-trigger effect, 
and a bistable regime, with threshold properties.
In order to discard from our observations rapid extinctions of bistable gene drives due 
to an insufficiently large initial value of $n_D$, in all following simulations we use as initial value on the right
$(n_D,n_O)=(0.95,0.05)$ in a wide enough interval. With such a choice of initial data, we always observe on the right 
the convergence of $(n_D,n_O)$ to the steady state without wild-type $(n_D,n_O)=\left( \max\left(0,1-\frac{s}{r(1-s)}\right),0 \right)$.

We say that the gene drive invasion is successful, or that the gene drive is \textit{viable}, if the invasion speed $c_{s,r}$
is negative. On the contrary, if $c_{s,r}\geq 0$, we say that the gene drive invasion fails and that the gene drive is 
\textit{nonviable}:
in particular, when $c_{s,r}>0$, the gene drive is repelled and eliminated by the wild-type population. When introduced in the center 
of the spatial domain, a nonviable gene drive collapses in finite time.

In order to estimate the speed, it is convenient to have a well-defined level-set to follow. Hence we consider now the monotonicity
of the traveling wave profiles.
Although $N_D$ is in many cases not monotonic (see again Figure \ref{fig:time_evolution} and Figure \ref{fig:wave_profile_illu}), 
we observe in all simulated time evolutions the following monotonicities:
\begin{itemize}
    \item the wild-type population profile $N_O$ is strictly monotonic and connects $1$ to $0$;
    \item the frequency profile $P=N_D/(N_D+N_O)$ (plots of $p(t,x)$ not shown here) is monotonic -- but not always strictly, as we 
    sometimes observe $P=0$ everywhere;
    \item the total population profile $N=N_D+N_O$ (plots of $n(t,x)$ not shown here) is strictly monotonic and connects $1$ 
    to $\max\left(0,1-\frac{s}{r(1-s)}\right)$.
\end{itemize}
We systematically tested the above monotonicity properties of $P$ and $N$ in a wide parameter range for $s$ and $r$. 
The result is displayed on Figure~\ref{fig:monotonicity_heatmap} and confirms the preceding claim 
-- note that $N_O'=((1-P)N)'=-P'N+(1-P)N'$ is negative as soon as $-P'$ and $N'$ are respectively
nonpositive and negative, whence it suffices to check the monotonicities of $P$ and $N$.
Therefore we can numerically estimate the speed $c_{s,r}$ by tracking the well-defined $1/2$-level set of $n_O(t,x)$.

\begin{figure}
    \resizebox{.6\textwidth}{!}{\input{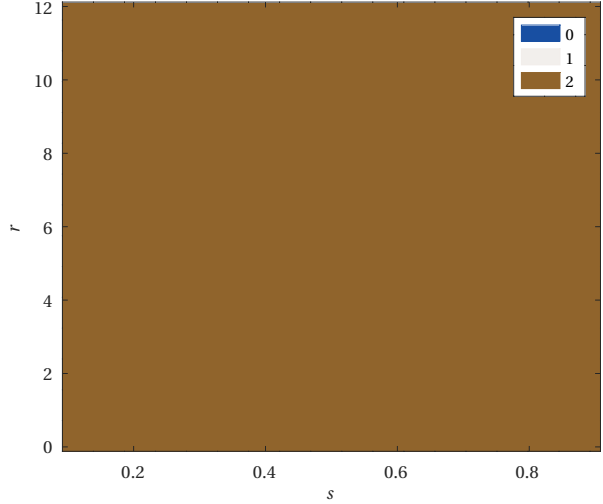}}
    \caption{Monotonicity heatmap in the $(s,r)$ plane. The output is an integer in $\{0,1,2\}$ that approximates
    the number of monotonic functions in $\{P,N\}$. The monotonicity is tested by testing the inequalities
    $\min_x\partial_x p(T,x)/\|\partial_x p(T,x)\|_{L^\infty}>-\varepsilon$ and 
    $\max_x\partial_x n(T,x)/\|\partial_x n(T,x)\|_{L^\infty}<\varepsilon$, with $T=200$ the final time of the simulation, when
    the traveling wave regime is reached. With no 
    margin of error ($\varepsilon=0$), the tests systematically fail due to numerical artifacts. Here, $\varepsilon=10^{-6}$ and the whole
    parameter range is filled in brown color: at each point $(s,r)$, the normalized derivatives of $x\mapsto p(T,x)$ and $x\mapsto n(T,x)$
    have both a constant sign up to a margin of error of $10^{-6}$.}
    \label{fig:monotonicity_heatmap}
\end{figure}

Estimations of the values of $c_{s,r}$ when $s$ and $r$ vary are displayed on heatmaps on Figure \ref{fig:heatmaps}, where
they are also compared to similar estimations for the density-independent equations \eqref{eq:no_dens_dep_at_all} and
\eqref{eq:TSN} (both equations being in some way related to our model, cf. Section \ref{sec:other_models}).
Let us note that for the equation \eqref{eq:no_dens_dep_at_all}, whose reaction term is a classical cubic nonlinearity,
an explicit formula is known for the bistable wave speed: $c_{s,r}=c_s=(2-3s)/\sqrt{2s}$. Hence its $0$-level set is exactly
at $s=2/3$. For the more complicated equation \eqref{eq:TSN} with a denominator,
we do not know any algebraic formula for the wave speed but the 
$0$-level set can still be computed approximately \cite{Tanaka_Stone_N}: $s\simeq 0.697$.
Let us also point out that despite the fact that equation \eqref{eq:no_dens_dep_at_all} arises as a weak selection 
approximation in Section \ref{sec:other_models}, we plot its values for relatively large values of $s$: in our opinion, it is interesting 
to compare on one hand the equation \eqref{eq:TSN} and its weak selection approximation \eqref{eq:no_dens_dep_at_all}, as was done
in Tanaka--Stone--Nelson \cite{Tanaka_Stone_N}, and on the other hand the equation \eqref{eq:TSN} and our system \eqref{sys:dens_logistic}. 
In particular, such a comparison clearly shows that accounting for population dynamics has a much stronger effect than accounting for 
strong selection.

The following comments on Figure~\ref{fig:heatmap_large_r} are in order.
\begin{itemize}
    \item The wave speed $c_{s,r}$ is (surprisingly to us) not a continuous function of $(s,r)$. More precisely, a jump discontinuity divides into two parts the parameter region corresponding to nonviable eradication drives. Each part corresponds to a different kind of
    nonviable eradication drive: on the left-hand side of the discontinuity, we observe drives with $P$ strictly increasing (case $c>0$ of Figure~\ref{fig:wave_profile_illu}), whereas on the right-hand side we observe drives with $P=0$ identically. 
    For the latter kind, the traveling wave observed numerically (see Figure \ref{fig:time_evolution_to_KPP})
	is actually a classical Fisher--KPP traveling wave for the $n_O$ population: after having been eradicated from the right-hand
	side of the domain by the gene drive and having waited long enough for the eradication of the gene drive itself, 
	the wild-type population invades the new open space at its Fisher--KPP speed $2\sqrt{r}$. This is confirmed graphically by the 
	fact that in the bottom right corner, the speed $c_{s,r}$ only depends on $r$. 
    \item Away from this discontinuity, the $0$-level set of $c_{s,r}$ seems to be an increasing, 
	strictly convex curve, originating from $(s,r)=(1/2,0)$ and admitting a vertical straight line as asymptote. The equation
	of this asymptote has the form $s=s_0$ for some undetermined $s_0\in[0.65,0.7]$. 
	In particular, the level set is included in $\{(s,r)\ |\ s\in[0.5,0.7]\}$: 
	if $s< 0.5$, then $c_{s,r}<0$ and the gene drive invades; if $s>0.7$, then $c_{s,r}>0$ and the gene drive is nonviable.
	These two conclusions for the parameter range $s\notin[0.5,0.7]$ were already valid for the density-independent equations
	\eqref{eq:TSN} and \eqref{eq:no_dens_dep_at_all} (see Figures \ref{fig:heatmap_no_dens_dep_at_all} and \ref{fig:heatmap_TSN}).
    \item There exist viable eradication drives, yet the corresponding parameter region is small, especially when intersected with
    the half-space defined by $s\geq 1/2$.
\end{itemize}

\begin{figure}
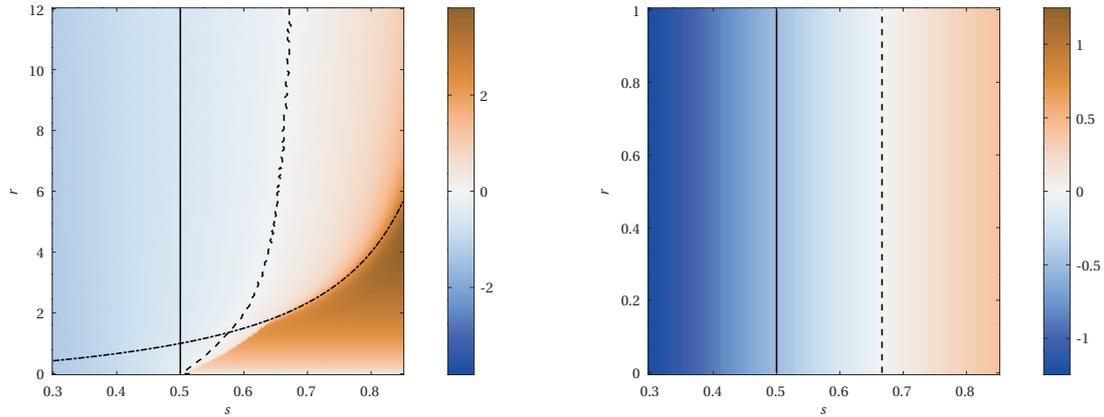
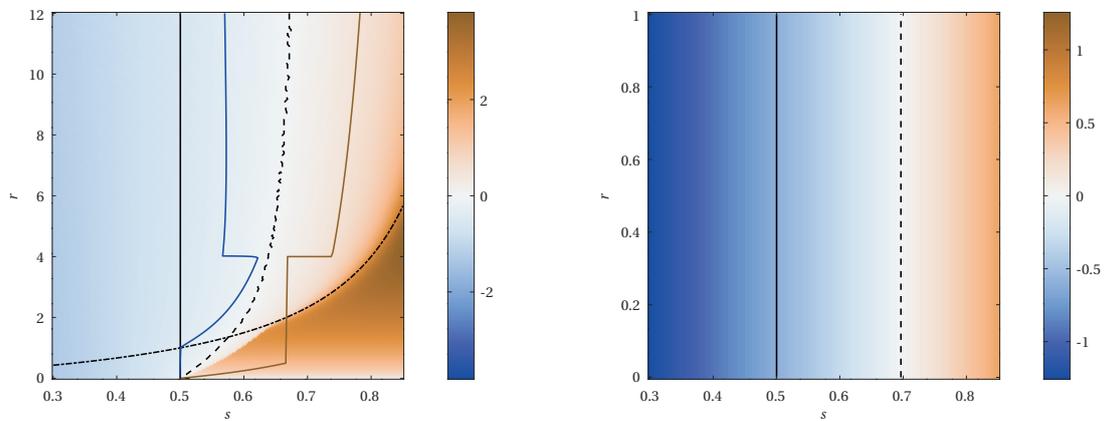

\begin{subfigure}[t]{.49\textwidth}
    \resizebox{\textwidth}{!}{\input{heatmap_large_r.tex}}
    \caption{For the system \eqref{sys:dens_logistic}.}
    \label{fig:heatmap_large_r}
\end{subfigure}
\hfill
\begin{subfigure}[t]{.49\textwidth}
    \resizebox{\textwidth}{!}{\input{heatmap_no_dens_dep.tex}}
    \caption{For the equation \eqref{eq:no_dens_dep_at_all}.}
    \label{fig:heatmap_no_dens_dep_at_all}
\end{subfigure}

\begin{subfigure}[t]{.49\textwidth}
    \resizebox{\textwidth}{!}{\input{heatmap_large_r_++.tex}}
    \caption{Same as \ref{fig:heatmap_large_r} with two colored lines delimiting a central zone where the sign is analytically unknown (cf. Theorems 
    \ref{thm:main_nonexistence} and \ref{thm:main_sign_when_nontrivial_existence}).}
    \label{fig:heatmap_large_r_++}
\end{subfigure}
\hfill
\begin{subfigure}[t]{.49\textwidth}
    \resizebox{\textwidth}{!}{\input{heatmap_TSN_no_dens_dep.tex}}
    \caption{For the equation \eqref{eq:TSN}.}
    \label{fig:heatmap_TSN}
\end{subfigure}
    \caption{Heatmaps of $c_{s,r}$ values in the $(s,r)$ plane comparing \eqref{sys:dens_logistic} and two density-independent models
    (note that the scaling of the colorbar varies). 
    Dashed curves: $0$-level set. Solid black curves: monostability--bistability threshold. 
    Dashed-dotted curve: level of $r$ below which the drive is an eradication drive.
    As the equations \eqref{eq:TSN} and \eqref{eq:no_dens_dep_at_all} do not depend on $r$, the level lines on their respective
    figures are vertical.}
    \label{fig:heatmaps}
\end{figure}

\begin{figure}
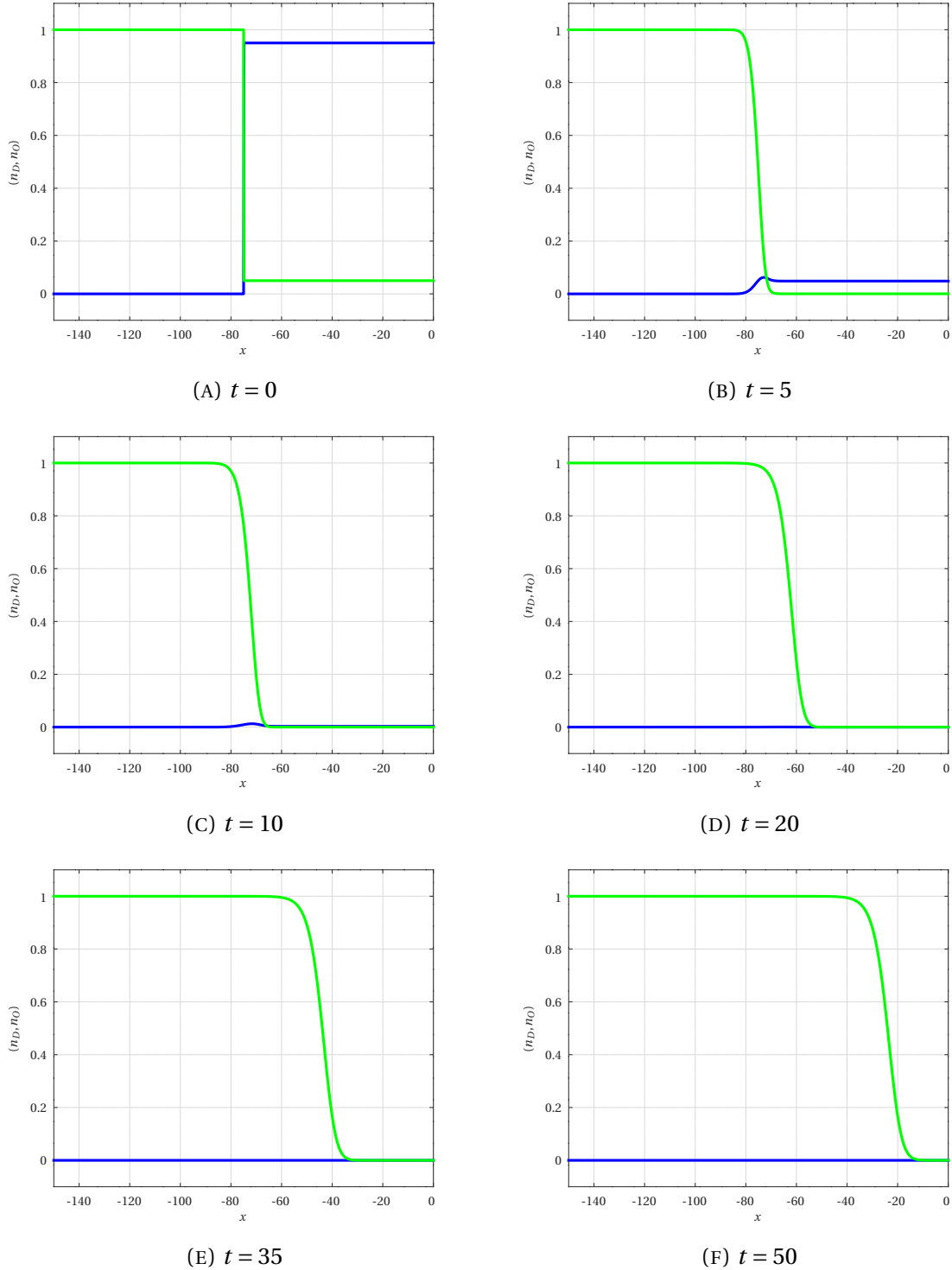

    \begin{subfigure}{.49\textwidth}
        \resizebox{\linewidth}{!}{\input{evolution_to_KPP_0.tex}}
        \caption{$t=0$}
    \end{subfigure}
    \hfill
    \begin{subfigure}{.49\textwidth}
        \resizebox{\linewidth}{!}{\input{evolution_to_KPP_1.tex}}
        \caption{$t=5$}
    \end{subfigure}
    
    \begin{subfigure}{.49\textwidth}
        \resizebox{\linewidth}{!}{\input{evolution_to_KPP_2.tex}}
        \caption{$t=10$}
    \end{subfigure}
    \hfill
    \begin{subfigure}{.49\textwidth}
        \resizebox{\linewidth}{!}{\input{evolution_to_KPP_4.tex}}
        \caption{$t=20$}
    \end{subfigure}
    
    \begin{subfigure}{.49\textwidth}
        \resizebox{\linewidth}{!}{\input{evolution_to_KPP_7.tex}}
        \caption{$t=35$}
    \end{subfigure}
    \hfill
    \begin{subfigure}{.49\textwidth}
        \resizebox{\linewidth}{!}{\input{evolution_to_KPP_10.tex}}
        \caption{$t=50$}
    \end{subfigure}
    \caption{Numerical simulation of the solution of the system \eqref{sys:dens_logistic} at different times (varying time between two snapshots). 
    Blue curve: $n_{D}(t,x)$. Green curve: $n_{O}(t,x)$. Here $s=0.7$, $r=0.5$,
    so that by Theorem \ref{thm:main_nonexistence}, traveling waves are trivial; moreover,
    $(2s-1)/s=4/7<0.95/(0.95+0.05)=0.95$, so that the extinction of $n_D$ is not due to the bistability threshold.}
    \label{fig:time_evolution_to_KPP}
\end{figure}

\subsubsection{Analytical results}\label{sec:analytical_results}
Although finding the sign of the wave speed for a scalar bistable reaction--diffusion equation is easy (multiply the equation
by the derivative of the wave profile and integrate over $\mathbb{R}$), it is in general a very challenging problem
for bistable reaction--diffusion systems of equations. The method used in the scalar case only works for systems whose reaction term
has a specific gradient form -- which is not the case here -- and, apart from this method, no general method is known.
Systems devoid of gradient form have to be studied on a case-by-case basis and most of the time these studies lead to
results on very particular cases with specific algebraic requirements on the parameters of the system.
For more details on this topic, we refer to the recent review by the first author on the sign of 
the wave speed for two-species Lotka--Volterra competition--diffusion systems \cite{Girardin_2019}.

Hence we believe that, given the present state of knowledge in the mathematical analysis of reaction--diffusion systems, an explicit complete characterization of the sign of $c_{s,r}$ is currently out of reach. We can nonetheless
aim for a collection of partial results. In our opinion, the value of such results is twofold:
\begin{itemize}
    \item on one hand, they confirm in some regions of the parameter space the numerical experiments;
    \item on the other hand, their proofs give precious insights on the deep structure of the equations, that 
	might be useful in possible future work.
\end{itemize}

Regarding the existence of traveling waves $(p,n)(t,x)=(P,N)(x-ct)$ with monotonic $P$ and $N$, we point out first that the 
existence in the case $P=0$ identically reduces to a standard question for the Fisher-KPP equation and the conclusion is well known: 
there exists such a traveling wave with speed $c$ if and only if $c\geq 2\sqrt{r}$ and the traveling wave with speed 
$c\geq 2\sqrt{r}$ is unique (up to spatial translation).
On the contrary, the case $P\neq 0$ is a much more delicate issue here than in standard systems.
On one hand, we will prove with Theorem \ref{thm:main_nonexistence} an explicit nonexistence result in a certain parameter range, 
showing that the existence for all values of $(r,s)$ is simply false. On the other hand, known methods for constructing traveling waves rely either
on monotonicity properties of the reaction term and super-sub-solutions or on ODE shooting arguments with stable or unstable manifolds. 
Yet, both methods seem inappropriate here:
both systems \eqref{sys:dens_logistic} and \eqref{sys:freq_logistic} have changing monotonicities and lack $\mathscr{C}^1$ 
regularity as $n\to 0$. We believe we might be able to obtain
analytical existence results in certain parameter ranges (say, away from the eradication zone and for small values of $s$ 
where the system has a nice KPP structure \cite{Girardin_2016_2}), but not in the range that interests us most, namely 
$s\in\left[ 0.5,0.7 \right]$ and in the eradication zone or close to it.
Therefore we decide to leave the rigorous existence problem as an open problem\footnote{The empirical proof of existence 
provided by our numerical experiments might be sufficiently convincing for many non-mathematicians.}.

In any case, the biologically most relevant question is not the question of existence but rather the question of the direction
of the propagation, namely the sign of the wave speed $c_{s,r}$. Hence in the forthcoming Theorem \ref{thm:main_sign_when_nontrivial_existence}
we will focus indeed on \textit{a priori} estimates for this sign, or in other words we will study the sign of the wave speed of 
any existing traveling wave.

We use the variables $(p,n)$ to write the following theorems; the analogous results with variables
$(n_D,n_O)$ can be written easily. In order to simplify the statements, we exclude traveling waves
that are not monotonic or do not converge exponentially at $-\infty$; this decision is consistent
with numerical observations. We also choose to list only the results that give explicit regions of the
$(s,r)$ plane where the sign is known, but our proofs actually give slightly larger implicit regions.

\begin{defi}\label{def:tw}
    A \textit{traveling wave solution} of \eqref{sys:freq_logistic} is a bounded nonnegative classical solution of the 
    form $(p,n)(t,x)=(P,N)(x-ct)$ satisfying:
    \begin{enumerate}
        \item $P$ is nondecreasing, $N$ is decreasing;
        \item $\lim_{-\infty}(P,N)=(0,1)$.
    \end{enumerate}
    Furthermore, the traveling wave is referred to as \textit{trivial} if $P=0$ and \textit{nontrivial} if $P\neq 0$.
\end{defi}

Obviously, a trivial, respectively nontrivial, traveling wave solution $(n_D,n_O)$ of \eqref{sys:dens_logistic} is a 
solution such that the associated solution $(p,n)$ of \eqref{sys:freq_logistic} is a trivial, respectively nontrivial,
traveling wave solution itself.

Our two main results follow. Graphically, they are summarized on Figure \ref{fig:heatmap_large_r_++}.

\begin{thm}[Nonexistence of nontrivial waves]
    \label{thm:main_nonexistence}
    If $s>\frac12$ and 
    \[
	0<r\leq\frac{2s-1}{2(1-s)},
    \]
    then all traveling waves, in the sense of Definition \ref{def:tw}, are trivial. 
    
    Consequently, for any traveling wave solution $(p,n)$, $n(t,x)=N(x-ct)$ is a Fisher--KPP traveling wave with speed $c\geq 2\sqrt{r}$.
\end{thm}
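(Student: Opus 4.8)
The plan is to recast the question in terms of the \emph{drive density profile} $N_D := P N$ rather than the frequency profile $P$. The point is twofold: $N_D$ satisfies an \emph{advection-free} scalar equation (the first line of \eqref{sys:dens_logistic} read in the moving frame), so the maximum principle applies directly; and, unlike the monotone $P$, the function $N_D$ vanishes at \emph{both} ends of $\mathbb{R}$, so a nontrivial $N_D$ must display an interior bump, which I will forbid. Writing $\xi = x - ct$ and using $1 + N_O/N = 2 - P$, the profile equation for $N_D$ is
\[
 N_D'' + c\,N_D' + g\,N_D = 0, \qquad g := (1-s)\,(2-P)\,\bigl(r(1-N)+1\bigr) - 1 .
\]
Everything reduces to two facts: (i) $g<0$ wherever $N_D>0$; (ii) $N_D(\pm\infty)=0$.

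First I would record the elementary observations. Since $N$ is decreasing with $N(-\infty)=1$, we have $0\le N\le 1$, hence $1\le r(1-N)+1\le r+1$; and since $N$ solves a scalar elliptic equation that is linear in $N$ with bounded coefficient and $N\not\equiv 0$, the strong maximum principle gives $N>0$ on $\mathbb{R}$ (which also legitimizes passing between $(P,N)$ and $(N_D,N_O)=(PN,(1-P)N)$). For $s>\tfrac12$ the hypothesis $0<r\le\frac{2s-1}{2(1-s)}$ is \emph{exactly} equivalent to $2(1-s)(r+1)\le 1$, so from $2-P\le 2$ we get $g\le 2(1-s)(r+1)-1\le 0$ everywhere, with strict inequality at every point where $P>0$ and $N>0$, in particular wherever $N_D=PN>0$. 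This is fact (i).

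For fact (ii): at $-\infty$, $N_D=PN\to 0$. At $+\infty$, monotonicity of $P$ and $N$ forces limits $(P_+,N_+)$ with $0\le N_+\le 1$. Either $N_+=0$, and then $N_D(+\infty)=P_+N_+=0$; or $N_+>0$, and then by standard elliptic estimates the reaction vanishes at $(P_+,N_+)$. In the latter case, positivity of the coefficient $(r(1-N_+)+1)s$ forces $P_+\in\{0,\theta,1\}$ with $\theta=\frac{2s-1}{s}$; but $P_+=1$ would pin $N_+$ at $1-\frac{s}{r(1-s)}$ and $P_+=\theta$ would pin it at $1-\frac{2s-1}{r(1-s)}$ (using $1-\theta=\frac{1-s}{s}$), both negative under $r\le\frac{2s-1}{2(1-s)}<\frac{s}{1-s}$ --- contradicting $N_+>0$. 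Hence $P_+=0$ and $N_D(+\infty)=0$ again. Now, if $N_D\not\equiv 0$, then being continuous, nonnegative and vanishing at $\pm\infty$, it attains a positive maximum at some $\xi^\ast\in\mathbb{R}$; there $N_D'(\xi^\ast)=0$ and $N_D''(\xi^\ast)\le 0$, whereas the equation gives $N_D''(\xi^\ast)=-g(\xi^\ast)N_D(\xi^\ast)>0$, since $N_D(\xi^\ast)>0$ forces $P(\xi^\ast),N(\xi^\ast)>0$ and hence $g(\xi^\ast)<0$ by fact (i) --- a contradiction. So $N_D\equiv 0$, i.e. $P\equiv 0$, and every traveling wave is trivial. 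The last assertion is then immediate: with $P\equiv 0$ the $n$-equation of \eqref{sys:freq_logistic} reduces to $N''+cN'+rN(1-N)=0$ with $N(-\infty)=1$, $N$ decreasing and $0\le N\le 1$, forcing $N(+\infty)=0$; this is a monotone logistic Fisher--KPP front, which (as recalled before the theorem) exists only for $c\ge 2\sqrt{r}$.

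I expect the real obstacle to be fact (ii), precisely the identity $N_D(+\infty)=0$: this is the only place where the geometry of the reaction equilibria intervenes, and it is here that the threshold $r=\frac{2s-1}{2(1-s)}$ is used a \emph{second} time (the first being to make $g\le 0$), in order to eliminate the drive-fixation equilibrium $P_+=1$ and the internal equilibrium $P_+=\theta$, both of which would require a positive far-field population forbidden by the hypothesis. The rest --- positivity of $N$, the algebraic rewriting of the hypothesis, and the one-point maximum-principle argument --- is routine; in particular the sign of $c$ plays no role, and no decay estimate on the profiles at $+\infty$ is needed.
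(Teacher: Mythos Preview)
Your proof is correct. Both your argument and the paper's hinge on the same algebraic observation --- that the hypothesis $r\le\frac{2s-1}{2(1-s)}$ is equivalent to $2(1-s)(r+1)\le 1$, which makes the effective growth rate of $n_D$ nonpositive --- but the executions differ. The paper works at the level of the Cauchy problem for \eqref{sys:dens_logistic}: from $1+\frac{n_O}{n}\le 2$ and $1-n\le 1-n_D$ it builds a scalar Fisher--KPP supersolution $\overline{n_D}$ with intrinsic rate $2(1-s)(r+1)-1\le 0$, and the parabolic comparison principle forces $n_D\le\overline{n_D}\to 0$ uniformly; nonexistence of nontrivial waves is then an immediate corollary. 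Your route stays on the traveling-wave ODE and uses a one-point maximum principle on the profile $N_D$, which is more self-contained but costs you the extra step of pinning down $N_D(+\infty)=0$ via the classification of admissible equilibria (this is where you invoke the hypothesis a second time to exclude $P_+=1$ and $P_+=\theta$). The paper's approach buys slightly more --- it also yields the upper bound $2\sqrt{2(1-s)(r+1)-1}$ on the spreading speed of $n_D$ in the complementary range $r>\frac{2s-1}{2(1-s)}$ --- while yours is arguably cleaner for the theorem as stated, since it never leaves the stationary problem and makes no appeal to parabolic comparison.
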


Figure \ref{fig:time_evolution_to_KPP} is an example of spreading dynamics when $s>\frac12$ and $r\leq\frac{2s-1}{2(1-s)}$.

\begin{thm}[Sign of nontrivial wave speed]
    \label{thm:main_sign_when_nontrivial_existence}
    Assume \eqref{sys:freq_logistic} admits a nontrivial traveling wave solution $(p,n)(t,x)=(P,N)(x-ct)$, in the sense
    of Definition \ref{def:tw}, such that:
    \begin{enumerate}
	\item $P$ is strictly monotonic;
	\item $P$ converges exponentially fast to $0$ at $-\infty$.
    \end{enumerate}

    Then:
    \begin{enumerate}
	\item $c<0$ if:
            \begin{enumerate}
		\item $s\leq\frac12$; moreover, $P$ converges at $+\infty$ to $1$ and $c\leq-2\sqrt{1-2s}$;
		\item $s\in\left[ \frac12,\frac{2}{3} \right]$, $r>\frac{s}{1-s}$, 
		    $(P,N)$ converges at $+\infty$ to $\left( 1,\frac{1}{r}\left( r-\frac{s}{1-s} \right) \right)$ and 
		    \begin{equation*}
			4\geq r\geq\left( \frac{s}{1-s} \right)\left( 1-\left(\frac{(1-s)\left( \frac{2s-1}{s} \right)^3}{2-3s+\left( \frac{2s-1}{s} \right)^3}\right)^{1/4} \right)^{-1}> 0;
		    \end{equation*}
		\item $s\in\left[ \frac12,\frac{2}{3} \right]$, $r\geq 4$, $(P,N)$ converges at $+\infty$ to $\left( 1,\frac{1}{r}\left( r-\frac{s}{1-s} \right) \right)$ and 
                    \begin{equation*}
                        \left( 1-\frac{s}{r(1-s)} \right)^4\left( 2-3s \right)\geq\left( r+1-\left( 1-\frac{s}{r(1-s)} \right)^4 \right)\left( \frac{2s-1}{s} \right)^3;
                    \end{equation*}
            \end{enumerate}
	\item $c>0$ if:
	    \begin{enumerate}
        	\item $s\geq\frac{2}{3}$ and $r\leq 4$;
        	\item $s>\frac{2}{3}$ and 
		    \begin{equation*}
			r<\frac{s^3(3s-2)}{(2s-1)^3-s^3(3s-2)}.			
		    \end{equation*}
	    \end{enumerate}
    \end{enumerate}
\end{thm}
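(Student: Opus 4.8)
Set $\xi=x-ct$, $f(n)=r(1-n)+1$, $\theta=\frac{2s-1}{s}$, and use $P''+2(\log N)'P'=N^{-2}(N^2P')'$ to rewrite the profile equations for a traveling wave of system~\eqref{sys:freq_logistic} as
\[
(N^2P')'+cN^2P'+s\,N^2 f(N)\,P(1-P)(P-\theta)=0,\qquad N''+cN'+N\big((1-s+s(1-P)^2)f(N)-1\big)=0 .
\]
Multiply the $N$-equation by $N'$ and integrate over $\mathbb R$ (boundary terms vanish since $P',N'\to0$); as $N$ is strictly decreasing, the substitution $n=N(\xi)$ gives
\[
c\int_{\mathbb R}(N')^2 \;=\; \int_{N_\infty}^{1} n\Big((1-s+s(1-\mathcal P(n))^2)f(n)-1\Big)\,dn \;=:\; K ,
\]
where $N_\infty=\lim_{+\infty}N$ and $\mathcal P:=P\circ N^{-1}$ is decreasing with $\mathcal P(1)=0$, $\mathcal P(N_\infty)=P_\infty:=\lim_{+\infty}P$. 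Multiplying the first equation by $P'$, integrating, and substituting $p=P(\xi)$ (licit because $P$ is strictly monotone) gives, with $w(x):=x^2f(x)=(r+1)x^2-rx^3$ and $\mathcal N:=N\circ P^{-1}$ decreasing from $1$ to $N_\infty$,
\[
c\int_{\mathbb R}N^2(P')^2 \;=\; \int_{\mathbb R}N(-N')(P')^2 \;-\; s\,J, \qquad J:=\int_0^{P_\infty} w(\mathcal N(p))\,p(1-p)(p-\theta)\,dp .
\]
Since $\int(N')^2>0$ and $\int N^2(P')^2>0$, the sign of $c$ equals the sign of $K$ and also that of $\int N(-N')(P')^2-sJ$; and since $\int N(-N')(P')^2>0$ ($N$ is nonconstant), \emph{$J\le0$ already forces $c>0$}. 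The whole proof consists in signing $K$ (for the $c<0$ items) or $J$ (for the $c>0$ items) using only the monotonicity and endpoint values of $\mathcal N$ and $\mathcal P$, together with — for the $c<0$ part — the a priori monotonicity of $N_O=(1-P)N$ recorded in Section~1.

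\textbf{The case $s\le\frac12$ (item 1a).} Then $\theta\le0$, the cubic $p\mapsto p(1-p)(p-\theta)$ is $\ge0$ on $[0,1]$, and the $p$-reaction is monostable near $p=0$. The only interior equilibrium candidate for $P_\infty$ is $\theta\le0$, impossible, so $P_\infty\in\{0,1\}$, hence $P_\infty=1$ for a nontrivial wave, with $N_\infty=\max\{0,1-\frac{s}{r(1-s)}\}$. Linearizing the $P$-equation at $\xi=-\infty$, where $N\to1$ and the advection term is negligible, an exponentially decaying nonnegative profile behaves like $e^{\mu\xi}$ with $\mu>0$ a root of $\mu^2+c\mu+(1-2s)=0$; inspecting the roots (product $1-2s\ge0$, sum $-c$) forces $c<0$ and $c\le-2\sqrt{1-2s}$, the borderline $s=\frac12$ being settled by noting that $\mu^2+c\mu=0$ has a positive exponential rate only when $c<0$.

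\textbf{The $c>0$ items (2a, 2b).} It suffices to prove $J\le0$. If the wave has $P_\infty<1$ then $P_\infty=\theta$, the cubic is $\le0$ on $[0,\theta]$, and $J<0$ at once. If $P_\infty=1$, recall $\int_0^1 p(1-p)(p-\theta)\,dp=\frac{1-2\theta}{12}$, which is $\le0$ exactly when $\theta\ge\frac12$, i.e. $s\ge\frac23$; so with a constant weight $J\le0$ would follow, and the work is to show the genuine weight $w$ cannot spoil this. The function $w$ has a single interior maximum at $x_r=\frac23+\frac2{3r}$, with $x_r\ge\frac56$ precisely when $r\le4$. For $r\le2$, $w$ is increasing on $[0,1]$, $p\mapsto w(\mathcal N(p))$ is decreasing, and the elementary estimate for a decreasing weight against a $(-,+)$ function gives $J\le w(\mathcal N(\theta))\int_0^1 p(1-p)(p-\theta)\,dp\le0$. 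For $r\in(2,4]$, $w(\mathcal N(\cdot))$ is only unimodal in $p$, and one bounds it above on $(0,\theta)$ and below on $(\theta,1)$ using $N_\infty\le\mathcal N\le1$ and the position of $x_r$, together with $\int_0^\theta p(1-p)(p-\theta)\,dp=-\frac{\theta^3}{12s}$ (since $2-\theta=\frac1s$), still obtaining $J\le0$ when $r\le4$: this is item~2a. Item~2b refines these bounds for $s$ near $1$ (using $w(N_\infty)=\frac{N_\infty^2}{1-s}$), producing the explicit threshold $r<\frac{s^3(3s-2)}{(2s-1)^3-s^3(3s-2)}$, vacuous near $s=\frac23$ but valid for every $r$ as $s\to1$.

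\textbf{The $c<0$ items (1b, 1c), and the main obstacle.} Here $r>\frac{s}{1-s}$, $N_\infty=\frac1r(r-\frac{s}{1-s})\in(0,1)$, and the wave connects $(0,1)$ to $(1,N_\infty)$; we prove $K<0$. The integrand $n\big((1-s+s(1-\mathcal P(n))^2)f(n)-1\big)$ vanishes at $n=1$ (where $\mathcal P=0$) and at $n=N_\infty$ (where $\mathcal P=1$ and $f(N_\infty)=\frac1{1-s}$), so $K<0$ demands the bracket be negative on most of $[N_\infty,1]$, hence a \emph{lower} bound on $\mathcal P(n)$ that is sharp as $n\to N_\infty$. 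One builds such a bound from the monotonicity of $\mathcal P$, the monotonicity of $n\mapsto(1-\mathcal P(n))n$ (the wild-type profile, with endpoints $0$ at $N_\infty$ and $1$ at $1$), and the threshold structure of the $P$-equation (the drive must overshoot $\theta$ in the back of the wave); substituting it into $K$ and integrating the resulting polynomials over $[N_\infty,1]$ — via $\int_{N_\infty}^1 n^3\,dn=\frac{1-N_\infty^4}{4}$, whence the quartic in $N_\infty=1-\frac{s}{r(1-s)}$, together with cubic integrals governed by $\theta$, whence $\theta^3$ — yields $K<0$ under the stated inequalities, the coefficient of $\theta^3$ being $1-s$ when the relevant polynomial weight is monotone on $[N_\infty,1]$ (case $r\le4$, item~1b) and $r+1$ when it is not (case $r\ge4$, item~1c). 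The hard part throughout is exactly the control of the unknown monotone changes of variable $\mathcal N(p)$ and $\mathcal P(n)$: as stressed in Section~1 the system has no gradient structure, so there is no energy to integrate and no identity pinning these functions down; one must beat the constant-weight threshold $s=\frac23$ (equivalently $\theta=\frac12$, equivalently $2-3s=0$) using only crude monotonicity bounds, which is what forces the case distinctions by the sign of $2-3s$, the position of $x_r=\frac23+\frac2{3r}$, and whether $r\lessgtr4$, and leaves the region $s\in[0.5,0.7]$ near the eradication zone beyond reach.
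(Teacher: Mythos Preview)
Your framework of two master identities is sound, and item~1a is essentially the paper's linearization argument. But for the remaining items there are genuine gaps.

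\textbf{Items 1b/1c.} You propose to sign $K=\int_{N_\infty}^1 n\big((1-s+s(1-\mathcal P)^2)f(n)-1\big)\,dn$, coming from the $N$-equation. The paper does something different: via the Nadin--Strugarek--Vauchelet change of variable it obtains the \emph{exact} sign formula $\operatorname{sign}(c)=-\operatorname{sign}\int_0^1 h(V)^4 f(h(V))\,Vs(1-V)(V-\theta)\,dV$, with weight $h^4 f(h)$ (not $h^2 f(h)$). The factor $N_\infty^4=(1-\tfrac{s}{r(1-s)})^4$ in the stated inequalities comes from the crude bound $h\ge N_\infty$ inside this $h^4$; it does \emph{not} come from $\int_{N_\infty}^1 n^3\,dn$ as you suggest. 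Your argument for $K<0$ is only a sketch (``one builds a lower bound on $\mathcal P(n)$ from\ldots'') and never produces the specific inequalities of the statement; moreover the paper itself records in the discussion that the $N$-equation identity --- essentially your $K$ --- ``did not manage to deduce anything interesting''. So this route, as written, does not establish 1b/1c.

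\textbf{Item 2a.} Your one-sided implication $J\le 0\Rightarrow c>0$ is fine, but the weight in your $J$ is $w(x)=x^2 f(x)$, whose derivative along the wave is $h'h\,(2(r+1)-3rh)$; this makes $V\mapsto w(h(V))$ decreasing only when $r\le 2$. The paper's weight is $h^4 f(h)$, with derivative $h'h^3(4(r+1)-5rh)$, which is decreasing precisely when $r\le 4$; that is where the threshold $r\le 4$ in the statement actually comes from. For $r\in(2,4]$ your weight is genuinely non-monotone on $[N_\infty,1]$, and ``bound above on $(0,\theta)$, below on $(\theta,1)$'' does not close without knowing $\mathcal N(\theta)$. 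So the proof of 2a is incomplete in the range $2<r\le 4$.

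\textbf{Item 2b.} The paper does not use any integral identity here: it drops the advection term (which has a favorable sign since $N'P'\le 0$) and compares $p$ with the solution of a scalar bistable equation whose reaction carries the factor $(r+1)$ where $p>\theta$ and $1$ where $p<\theta$ (these are the sharp bounds on $B(n)=r(1-n)+1$). The exact threshold $r<\frac{s^3(3s-2)}{(2s-1)^3-s^3(3s-2)}$ is the zero of the integral of that piecewise reaction. Your $J$-route carries the extra factor $\mathcal N^2$, which does not reduce to the constants $1$ and $r+1$, so it is not clear your bounds can reproduce this precise threshold; in any case you have not carried out the computation.

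In short: replace $K$ by the Nadin--Strugarek--Vauchelet $h^4$-formula for 1b/1c and 2a (this is where $N_\infty^4$ and the $r\le 4$ monotonicity genuinely arise), and use the scalar comparison principle rather than $J$ for 2b.
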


\section{Proof of Theorems \ref{thm:main_nonexistence} and \ref{thm:main_sign_when_nontrivial_existence}}

In the whole section, the spatial domain is $\mathbb{R}$, so that solutions of \eqref{sys:dens_logistic} or \eqref{sys:freq_logistic} are one-dimensional.

\subsection{Proof of Theorem \ref{thm:main_nonexistence}}

\begin{prop}
    Let $(n_D,n_O)$ be a solution of \eqref{sys:dens_logistic} set in $(0,+\infty)\times\mathbb{R}$ 
    with $n_D(0,x)\geq0$ and $n_O(0,x)>0$ for all $x\in\mathbb{R}$ and $n_D(0,\bullet)$ non-zero.

    Then:
    \begin{enumerate}
	\item if $0<r\leq\frac12\frac{2s-1}{1-s}$, $n_D$ goes extinct spatially uniformly;
	\item if $r>\frac12\frac{2s-1}{1-s}$ and if $n_D(0,\bullet)$ is compactly supported,
	    either $n_D$ does not spread or it spreads at most at speed 
	    $2\sqrt{2(1-s)\left( r-\frac12\frac{2s-1}{1-s} \right)}$.
    \end{enumerate}
\end{prop}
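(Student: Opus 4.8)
The plan is to reduce the entire statement to one scalar differential inequality satisfied by $n_D$ alone, and then invoke standard comparison arguments. First I would note that, for $t>0$, one has $n_D\ge 0$ and $n_O>0$: the second follows from the strong maximum principle applied to the $n_O$-equation, and it makes $n:=n_D+n_O>0$, after which $n_D\ge 0$ follows from the maximum principle applied to the $n_D$-equation (whose right-hand side is $n_D$ times a factor that is locally bounded once $n>0$); I also take the initial data bounded so that the comparison principle on $\mathbb R$ is available. Writing $p=n_D/n$, the growth rate of $n_D$ in \eqref{sys:dens_logistic} is $f_D=(1-s)(2-p)\bigl(r(1-n)+1\bigr)-1$.

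The heart of the proof is a pointwise bound on $f_D$. I would expand $f_D=(1-s)(2-p)(r+1)-(1-s)(2-p)\,rn-1$ and then, using $1\le 2-p\le 2$ (so that the first term is at most $2(1-s)(r+1)$ while $(2-p)rn\ge rn$) and $n\ge 0$, obtain
\[
f_D\;\le\;2(1-s)(r+1)-1-(1-s)\,rn\;=\;\lambda-(1-s)\,rn,\qquad \lambda:=2(1-s)(r+1)-1 .
\]
A direct check gives the identity $\lambda=2(1-s)\bigl(r-\tfrac12\tfrac{2s-1}{1-s}\bigr)$, so $\lambda$ has the sign of $r-\tfrac12\tfrac{2s-1}{1-s}$, and $2\sqrt{\lambda}$ is exactly the speed in the statement. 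Multiplying the bound by $n_D\ge 0$ and using $n\ge n_D$ yields the closed inequality $\partial_t n_D-\Delta n_D\le \lambda\,n_D-(1-s)r\,n_D^2$ on $(0,+\infty)\times\mathbb R$.

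For item (1), $\lambda\le 0$ (equivalently $0<r\le\tfrac12\tfrac{2s-1}{1-s}$), so $\partial_t n_D-\Delta n_D\le-(1-s)r\,n_D^2$, and comparing on $\mathbb R$ with the space-independent supersolution $w(t)=\|n_D(0,\cdot)\|_\infty\bigl(1+(1-s)r\|n_D(0,\cdot)\|_\infty\,t\bigr)^{-1}$ (a solution of $w'=-(1-s)r\,w^2$) gives $0\le n_D(t,\cdot)\le w(t)\to 0$ uniformly, i.e.\ the spatially uniform extinction. For item (2), $\lambda>0$ and $n_D(0,\cdot)$ is supported in some $[-R,R]$ and bounded by $M$; dropping the nonpositive quadratic term, $n_D$ is a subsolution of $\partial_t u-\Delta u=\lambda u$, hence $n_D(t,x)\le e^{\lambda t}\bigl(G_t\ast n_D(0,\cdot)\bigr)(x)\le \tfrac{2RM}{\sqrt{4\pi t}}\,e^{\lambda t-(|x|-R)^2/(4t)}$ for $|x|>R$, with $G_t$ the heat kernel. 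For any $c>2\sqrt{\lambda}$ the exponent on $\{|x|\ge ct\}$ is $(\lambda-c^2/4)\,t+O(1)\to-\infty$, so $\sup_{|x|\ge ct}n_D(t,x)\to 0$: either $n_D$ dies out everywhere, or it spreads at speed at most $2\sqrt{\lambda}=2\sqrt{2(1-s)\bigl(r-\tfrac12\tfrac{2s-1}{1-s}\bigr)}$.

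The only delicate point is the borderline $r=\tfrac12\tfrac{2s-1}{1-s}$, i.e.\ $\lambda=0$, in item (1): there $n_D$ is merely a subsolution of the heat equation, so no exponential (or any) decay is visible at the linear level, and the argument really needs the self-limitation term $-(1-s)r\,n_D^2$ and the algebraically decaying supersolution above. Everything else is routine once the pointwise estimate $f_D\le\lambda-(1-s)rn$ is in hand; that estimate is the step I would spend the most care on, as it hinges on the sign asymmetry of the factor $2-p$ — bounded above by $2$ in the growth part of $f_D$, bounded below by $1$ in the density-limiting part $-(1-s)(2-p)rn$.
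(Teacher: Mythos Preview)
Your proof is correct and follows essentially the same approach as the paper: bound the reaction term of $n_D$ from above to obtain a closed scalar differential inequality, then invoke comparison. The paper compares directly with a Fisher--KPP supersolution $\partial_t\overline{n_D}-\partial_{xx}\overline{n_D}=\overline{n_D}(\lambda-2(1-s)r\,\overline{n_D})$ (obtaining the slightly sharper quadratic coefficient $2(1-s)r$ via $1-n\le 1-n_D$ rather than your $n\ge n_D$), whereas you unpack the comparison by hand with an ODE supersolution in case~(1) and a heat-kernel bound in case~(2); these differences are cosmetic and both routes yield the same linear rate $\lambda$ and speed bound $2\sqrt{\lambda}$.
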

\begin{proof}
    Using $1+n_O/(n_D+n_O)\leq 2$ and then $1-n_D-n_O\leq 1-n_D$, we find that $n_D$ satisfies
    \begin{equation*}
	\partial_t n_D -\partial_{xx} n_D\leq n_D(-1+2(1-s)(r+1)-2(1-s)rn_D).
    \end{equation*}
    By comparison principle, $n_D\leq\overline{n_D}$, where $\overline{n_D}$ is the solution of the Fisher--KPP equation
    \begin{equation*}
	\partial_t \overline{n_D} -\partial_{xx} \overline{n_D}= \overline{n_D}(-1+2(1-s)(r+1)-2(1-s)r\overline{n_D}).
    \end{equation*}
    If $0<r\leq\frac12\frac{2s-1}{1-s}$, the super-solution goes extinct uniformly, and so does $n_D$.
    Otherwise, the asymptotic speed of spreading of the super-solution, which is an upper bound for the asymptotic speed of spreading
    of $n_D$, is exactly the speed given in the statement.
\end{proof}

\begin{cor}
    If $0<r\leq\frac12\frac{2s-1}{1-s}$, then all traveling waves are trivial. 
    
    Consequently, any wave speed $c$ satisfies $c\geq 2\sqrt{r}$.
\end{cor}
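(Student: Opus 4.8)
The plan is to deduce this from the Proposition just proved, by exploiting the rigidity of a traveling-wave profile under the parabolic flow. Arguing by contradiction, suppose \eqref{sys:freq_logistic} admits a traveling wave $(P,N)(x-ct)$ in the sense of Definition \ref{def:tw} with $P\not\equiv 0$, and view it as the associated solution $(n_D,n_O)(t,x)=(N_D,N_O)(x-ct)$ of \eqref{sys:dens_logistic}, where $N_D=PN$ and $N_O=(1-P)N$. First I would verify that the hypotheses of the Proposition hold at time $t=0$: nonnegativity of $N_D$ and $N_O$ is part of Definition \ref{def:tw}, while strict positivity of $N$ and of $N_O$ follows from the strong maximum principle, since $n=n_D+n_O$ and $n_O$ each solve a scalar linear parabolic equation $u_t-u_{xx}=c(t,x)\,u$ with bounded zeroth-order coefficient (here $\big(1-s+s(1-p)^2\big)(r(1-n)+1)-1$, respectively $\tfrac{n_O}{n_D+n_O}(r(1-n)+1)-1$, both bounded because $0\le\tfrac{n_O}{n_D+n_O}\le 1$ and $N$ is bounded), and because each profile tends to a positive constant at $-\infty$ it cannot vanish anywhere. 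Hence $N>0$ and $N_O>0$ on $\mathbb{R}$, so $0\le P<1$ and $N_D=PN\not\equiv 0$.

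The heart of the argument is then immediate: since $0<r\leq\tfrac12\tfrac{2s-1}{1-s}$, part (1) of the Proposition gives that $n_D$ goes extinct spatially uniformly, i.e. $\|n_D(t,\cdot)\|_{L^\infty(\mathbb{R})}\to 0$ as $t\to+\infty$, whereas for a traveling wave $\|n_D(t,\cdot)\|_{L^\infty(\mathbb{R})}=\|N_D\|_{L^\infty(\mathbb{R})}$ is a positive constant independent of $t$ (a continuous nonnegative profile that is not identically zero). This contradiction forces $P\equiv 0$, i.e. every traveling wave is trivial.

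For the remaining assertion, I would substitute $n_D\equiv 0$ into the second equation of \eqref{sys:dens_logistic}, which collapses to the Fisher--KPP equation $\partial_t n_O-\partial_{xx}n_O=r\,n_O(1-n_O)$ for the profile $N=N_O$. As $N$ is decreasing with $\lim_{-\infty}N=1$ and is not the constant $1$, its limit at $+\infty$ must be the other equilibrium $0$; thus $N$ is a monotone front for the monostable nonlinearity $f(n)=r\,n(1-n)$ with $f'(0)=r$, and such fronts exist exactly for speeds $c\geq 2\sqrt{f'(0)}=2\sqrt{r}$, which is the claimed bound.

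The one genuinely delicate step is the verification that the Proposition — stated for Cauchy-problem data with $n_O(0,\cdot)>0$ everywhere — actually applies to the wave, i.e. that $N_O$ is \emph{strictly} positive rather than merely nonnegative, since a priori $N_O$ could vanish on a half-line if $P$ attained the value $1$. The strong maximum principle settles this; alternatively one may run the Proposition from a slightly earlier time and use that the parabolic flow instantaneously renders $n_O$ positive. Everything else — the contradiction with time-translation invariance of the profile, and the Fisher--KPP reduction — is routine.
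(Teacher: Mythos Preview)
Your proof is correct and follows exactly the intended argument: the paper states this result as an immediate corollary of the preceding Proposition without giving a separate proof, and your contradiction via the time-invariance of $\|N_D\|_{L^\infty}$ together with the Fisher--KPP reduction when $n_D\equiv 0$ is precisely the natural way to fill in the details. Your care about the strict positivity of $N_O$ (needed to invoke the Proposition) is more thorough than the paper, but entirely appropriate.
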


\subsection{Proof of Theorem \ref{thm:main_sign_when_nontrivial_existence}}

\subsubsection{Viable gene drives}

\begin{prop}
    Assume \eqref{sys:dens_logistic} admits a nontrivial traveling wave solution $(n_D,n_O)(t,x)=(N_D,N_O)(x-ct)$ satisfying
    \begin{equation*}
	0<\liminf_{-\infty}\frac{N_D''}{N_D},\quad 0<\liminf_{-\infty}\frac{N_D'}{N_D}.
    \end{equation*}

    Then, if $s<\frac12$, necessarily $c<0$.

    Furthermore, if there exists $\lambda>0$ such that
    \begin{equation*}
	\lim_{-\infty}\frac{N_D''}{N_D}=\lambda^2,\quad \lim_{-\infty}\frac{N_D'}{N_D}=\lambda,
    \end{equation*}
    then $s\leq\frac12$ implies $c\leq -2\sqrt{1-2s}$.
\end{prop}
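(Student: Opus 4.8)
The plan is to work directly with the profile equation satisfied by $N_D$. Writing $\xi=x-ct$ and inserting the nontrivial traveling wave into the first line of \eqref{sys:dens_logistic}, one gets the scalar equation $-cN_D'-N_D''=N_D\,g(\xi)$, where
\[
    g(\xi)=(1-s)\left(1+\frac{N_O}{N_D+N_O}\right)\bigl(r(1-N_D-N_O)+1\bigr)-1 .
\]
This is a linear second-order ODE for $N_D$ with locally bounded coefficient $g$, so by uniqueness of ODE solutions $N_D\not\equiv 0$ forces $N_D>0$ everywhere (at least near $-\infty$, which is all we will use); we may therefore divide by $N_D$ and study the identity
\[
    -c\,\frac{N_D'}{N_D}=\frac{N_D''}{N_D}+g(\xi).
\]
The only place the boundary condition enters is the limit of $g$ at $-\infty$: from $\lim_{-\infty}(P,N)=(0,1)$, i.e.\ $N_D\to0$ and $N_O\to1$, we read off $\frac{N_O}{N_D+N_O}=1-P\to1$ and $r(1-N_D-N_O)+1\to1$, hence $g(\xi)\to 2(1-s)-1=1-2s$.

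For the first assertion, I would take $\liminf_{\xi\to-\infty}$ in the last identity. Since $g$ converges, the right-hand side has $\liminf$ equal to $\liminf_{-\infty}\frac{N_D''}{N_D}+(1-2s)$, which is strictly positive when $s<\tfrac12$ thanks to the hypothesis $\liminf_{-\infty}\frac{N_D''}{N_D}>0$. Hence $\liminf_{-\infty}\bigl(-c\,\frac{N_D'}{N_D}\bigr)>0$. On the other hand, the hypothesis $\liminf_{-\infty}\frac{N_D'}{N_D}>0$ means $\frac{N_D'}{N_D}$ is bounded below by a positive constant near $-\infty$; if $c\geq0$ this would make $-c\,\frac{N_D'}{N_D}\leq0$ there, contradicting the positive $\liminf$. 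Therefore $c<0$.

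For the second assertion, under the sharper hypotheses $\frac{N_D'}{N_D}\to\lambda>0$ and $\frac{N_D''}{N_D}\to\lambda^2$, I would simply pass to the limit in the identity to get the algebraic relation $-c\lambda=\lambda^2+(1-2s)$, i.e.\ $c=-\lambda-\frac{1-2s}{\lambda}$ (equivalently $\lambda$ solves the linearized characteristic equation $\lambda^2+c\lambda+(1-2s)=0$ at the wild-type end). When $s\leq\tfrac12$ we have $1-2s\geq0$, so both $\lambda$ and $\frac{1-2s}{\lambda}$ are nonnegative and the elementary identity $\lambda+\frac{1-2s}{\lambda}-2\sqrt{1-2s}=\bigl(\sqrt{\lambda}-\sqrt{(1-2s)/\lambda}\bigr)^2\geq0$ yields $c\leq-2\sqrt{1-2s}$, with equality exactly at the minimal-speed exponent $\lambda=\sqrt{1-2s}$.

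I do not anticipate a genuine obstacle: the whole argument amounts to passing to a limit (or a $\liminf$) in a single scalar identity read off at $\xi\to-\infty$, followed by AM--GM. The only two points deserving a line of care are that $N_D>0$ so that all the ratios make sense (immediate from ODE uniqueness) and that $g$ truly has a limit at $-\infty$ — this is precisely where the explicit logistic choice of $B$ and $D$ together with the boundary condition $\lim_{-\infty}(P,N)=(0,1)$ are used. It is also worth noting that the refined hypotheses of the second part are not restrictive: they are automatic whenever $N_D\sim Ce^{\lambda\xi}$ near $-\infty$, which is exactly the exponential-decay assumption imposed in Theorem \ref{thm:main_sign_when_nontrivial_existence}.
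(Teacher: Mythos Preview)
Your proof is correct and follows essentially the same line as the paper's: both divide the traveling wave equation for $N_D$ by $N_D$, exploit the positivity of $N_D'/N_D$ and $N_D''/N_D$ near $-\infty$ together with $g(\xi)\to 1-2s$ to rule out $c\geq 0$ when $s<\tfrac12$, and then pass to the limit to obtain the characteristic equation $\lambda^2+c\lambda+(1-2s)=0$. Your AM--GM formulation $c=-\lambda-\frac{1-2s}{\lambda}\leq -2\sqrt{1-2s}$ is a clean variant of the paper's discriminant argument and has the minor advantage of delivering the sign of $c$ directly rather than relying on the first part.
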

\begin{proof}
    Close to $-\infty$, the wave profile $N_D$ is positive and, by virtue of the assumptions,
    increasing ($N_D'>0$) and strictly convex ($N_D''>0$). Assuming by contradiction $c\geq 0$ and plugging these inequalities
    into the traveling wave equation
    \begin{equation*}
	-N_D''-cN_D'=N_D\left( -1+\left( r(1-N_D-N_O)+1 \right)\left( 1-s \right)\left( 2-\frac{N_D}{N_D+N_O} \right) \right),
    \end{equation*}
    we get
    \begin{equation*}
	\frac{1}{1-s}>\left( r(1-N_D-N_O)+1 \right)\left( 2-\frac{N_D}{N_D+N_O} \right).
    \end{equation*}
    As the right-hand side converges to $2$ when $(N_D,N_O)\to(0,1)$, we deduce $s\geq\frac12$.

    Subsequently, assuming the exponential convergence of $N_D$ with rate $\lambda>0$ and 
    passing to the limit into the equation, we discover
    \begin{equation*}
	-\lambda^2-c\lambda=2(1-s)-1=1-2s
    \end{equation*}
    which has a positive solution if and only if $c^2\geq 4(1-2s)$.
\end{proof}

\begin{prop}
    Assume \eqref{sys:freq_logistic} admits a nontrivial traveling wave solution $(p,n)(t,x)=(P,N)(x-ct)$ with increasing $P$.

    Then, if $s\leq\frac12$, necessarily $c<0$.
\end{prop}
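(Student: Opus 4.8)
The plan is to recast the traveling-wave profile equation for $P$ as a first-order ODE for the auxiliary quantity $Q := N^2 P'$, and to use the monotonicity of $Q$ to rule out $c \geq 0$.

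First I would insert the ansatz $(p,n)(t,x) = (P,N)(x-ct)$, $\xi = x-ct$, into the $p$-equation of \eqref{sys:freq_logistic}, obtaining
\[
-P'' - \Bigl(c + 2\frac{N'}{N}\Bigr)P' = \bigl(r(1-N)+1\bigr)\,s\,P(1-P)\Bigl(P - \frac{2s-1}{s}\Bigr) =: f(P,N),
\]
and then multiply by $N^2$ to absorb the gene-flow advection term: since $N^2 P'' + 2NN'P' = (N^2P')'$, the profile equation becomes
\[
(N^2P')' = -c\,N^2 P' - N^2 f(P,N).
\]
At this point I would record the elementary sign facts: $N > 0$ on all of $\mathbb{R}$ (strong maximum principle for the non-trivial solution $N$, together with $N(-\infty)=1$) and $N \leq 1$ (because $N$ is decreasing with $N(-\infty)=1$), while $0 \leq P \leq 1$ with $P' \geq 0$ by Definition \ref{def:tw}/the hypothesis; and, crucially, when $s \leq \tfrac12$ one has $\tfrac{2s-1}{s} \leq 0$, so $P - \tfrac{2s-1}{s} \geq P \geq 0$ and hence $f(P,N) \geq 0$ everywhere. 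This favorable sign of the reaction term is exactly what makes the monostable regime $s \leq \tfrac12$ tractable, in contrast with $s > \tfrac12$.

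Next I would set $Q := N^2 P' \geq 0$, so that $Q' = -cQ - N^2 f$, and argue by contradiction: suppose $c \geq 0$. Then $Q \geq 0$, $c \geq 0$ and $N^2 f \geq 0$ force $Q' \leq 0$, i.e. $Q$ is nonincreasing on $\mathbb{R}$. Now if $Q(\xi_0) > 0$ for some $\xi_0$, then $Q(\xi) \geq Q(\xi_0)$ for all $\xi \leq \xi_0$, and since $N^2 \leq 1$ this gives $P'(\xi) = Q(\xi)/N(\xi)^2 \geq Q(\xi_0)$ for all $\xi \leq \xi_0$; integrating from $\xi$ to $\xi_0$ yields $P(\xi_0) - P(\xi) \geq Q(\xi_0)(\xi_0 - \xi) \to +\infty$ as $\xi \to -\infty$, contradicting $P \geq 0$. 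Hence $Q \equiv 0$, i.e. $P' \equiv 0$, i.e. $P \equiv P(-\infty) = 0$, which contradicts the wave being nontrivial. Therefore $c < 0$.

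The proof is short, so there is no real "hard part"; the only steps needing care are the two structural facts invoked without discussion — positivity of $N$ and the bound $N \leq 1$ on all of $\mathbb{R}$ — which are what make $Q = N^2 P'$ well defined and supply the inequality $1/N^2 \geq 1$ used in the integration step. It is worth emphasizing that this argument deliberately stays away from integrating $N^2 f$ over $\mathbb{R}$, so it requires no exponential decay or integrability of $P$ at $-\infty$; that is precisely why it holds under the minimal hypotheses of this proposition, whereas the sharper statements in Theorem \ref{thm:main_sign_when_nontrivial_existence} (quantitative bounds on $c$, the case $s > \tfrac12$) do need such decay and a more delicate energy-type computation.
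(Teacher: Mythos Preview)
Your proof is correct and takes a genuinely different route from the paper's. The paper first identifies the limit $P(+\infty)=1$, then uses the strict monotonicity of $P$ and $N$ to write $N=h(P)$ for a $C^2$ diffeomorphism $h$, and applies the Nadin--Strugarek--Vauchelet change of variable to reduce the question to a scalar monostable (degenerate when $s=\tfrac12$) reaction--diffusion equation, finally invoking the standard fact that all wave speeds for such equations are negative. Your argument bypasses all of this: the multiplication by $N^2$ and the monotonicity of $Q=N^2P'$ under the assumption $c\geq 0$ yield the conclusion directly, with no change of variable, no need to identify $P(+\infty)$, no appeal to external scalar-equation theory, and in fact no need for $P$ to be \emph{strictly} increasing. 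What the paper's approach buys in exchange for the extra machinery is additional information: it identifies the limit $P(+\infty)=1$ and, together with the preceding proposition, delivers the quantitative bound $c\le -2\sqrt{1-2s}$ stated in Theorem~\ref{thm:main_sign_when_nontrivial_existence}(1)(a); your argument, being purely qualitative, gives only the sign $c<0$.
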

\begin{proof}
    First, we focus on the critical case $s=\frac12$.
    
    By classification of the constant solutions of the system \eqref{sys:freq_logistic} when $s=\frac12$, 
    necessarily such a traveling wave admits $(1,0)$ or 
    $\left(1,\frac{1}{r}\left( r-\frac{s}{1-s} \right) \right)$ as limit at $+\infty$.
    In all cases, $P$ converges to $1$.

    We use the strict monotonicity of $N$ and $P$ to establish the existence of
    \[
        h:[0,1]\to[\lim_{+\infty}N,1],
    \]
    decreasing, bijective and of class $C^2$, such that $h(P)=N$. 
    Using a change of variable discovered by Nadin, Strugarek and Vauchelet \cite{Nadin_Strugarek_Vauchelet}, we deduce 
    that there exists a traveling wave $P(x-ct)$ connecting $0$ to $1$ with increasing profile
    if and only if there exists a traveling wave solution of
    \begin{equation*}
	\partial_t v -\partial_{xx} v= f\left( H^{-1}(v) \right)h^2\left( H^{-1}(v) \right),
    \end{equation*}
    where 
    \[
    f(v)=\frac12(r(1-h(v))+1)v^2(1-v)\quad\text{and}\quad H(v)=\int_0^v h^2(z)\textup{d}z,
    \]
    with increasing profile as well.
    The reaction term $g:v\mapsto f\left( H^{-1}(v) \right)h^2\left( H^{-1}(v) \right)$ is nonnegative and, by computing its derivatives at $0$, 
    it turns out that $g'(0)=f'(0)=0$ and $g''(0)=f''(0)=1>0$: this equation is degenerate monostable. 
    By standard results on degenerate monostable equations with 
    non-degenerate second derivatives (prototypical reaction term $v^2(1-v)$), there exists a traveling wave solution of speed $c$ for this
    equation if and only if $c\leq c^\star$, where $c^\star<0$.

    Next, we consider the case $s<\frac12$. It is in fact proved similarly, except this 
    time $g'(0)=f'(0)>0$, so that we do not even need to look at $g''(0)$.
\end{proof}

\begin{prop}
    Assume \eqref{sys:freq_logistic} admits a nontrivial traveling wave solution $(p,n)(t,x)=(P,N)(x-ct)$ with limit 
    $\left( 1,\frac{1}{r}\left( r-\frac{s}{1-s} \right) \right)$ at $+\infty$ and with increasing $P$.

    Then, if $r>\frac{s}{1-s}$ and if one of the following conditions holds true, necessarily $c<0$:
    \begin{enumerate}
	\item 
\begin{equation*}
    \left( 1-\frac{s}{r(1-s)} \right)^4\left( 2-3s \right)\geq\left( r+1-\left( 1-\frac{s}{r(1-s)} \right)^4 \right)\left( \frac{2s-1}{s} \right)^3;
\end{equation*}
\item 
\begin{equation*}
    4\geq r\geq\left( \frac{s}{1-s} \right)\left( 1-\left(\frac{(1-s)\left( \frac{2s-1}{s} \right)^3}{2-3s+\left( \frac{2s-1}{s} \right)^3}\right)^{1/4} \right)^{-1}> 0.
\end{equation*}
    \end{enumerate}
\end{prop}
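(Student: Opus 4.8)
The plan is to collapse the two-component profile system onto a \emph{scalar} bistable reaction--diffusion equation, for which the sign of the speed is governed by the classical balance identity, and then to estimate the resulting integral under each set of hypotheses. With $\xi=x-ct$ and $'=\frac{\mathrm{d}}{\mathrm{d}\xi}$, the profiles satisfy
\begin{align*}
    -cP'-P''-2\tfrac{N'}{N}P'&=(r(1-N)+1)\,sP(1-P)\left(P-\tfrac{2s-1}{s}\right),\\
    -cN'-N''&=N\left(\left(1-s+s(1-P)^2\right)(r(1-N)+1)-1\right).
\end{align*}
Exactly as in the preceding proof, the strict monotonicity of $P$ and $N$ produces a decreasing $h\in C^2([0,1];[m,1])$ with $N=h(P)$, where $m:=h(1)=1-\tfrac{s}{r(1-s)}\in(0,1)$ thanks to $r>\tfrac{s}{1-s}$, and $(1,m)$ is a steady state of \eqref{sys:freq_logistic}. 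Using $P''+2\tfrac{h'(P)}{h(P)}(P')^2=\tfrac{1}{h(P)^2}\big(h(P)^2P'\big)'$ and the Nadin--Strugarek--Vauchelet substitution $V:=H(P)$ with $H(v):=\int_0^v h(z)^2\,\mathrm{d}z$ (so $V$ increases from $0$ to $L:=H(1)$), the $P$-equation becomes the scalar equation $-V''-cV'=g(V)$, with $g(v):=h\big(H^{-1}(v)\big)^2 f\big(H^{-1}(v)\big)$, $f(P):=(r(1-h(P))+1)\,sP(1-P)(P-\theta)$ and $\theta:=\tfrac{2s-1}{s}\in(0,\tfrac12]$; the reaction $g$ vanishes at $0$ and $L$ and changes sign once, at $v=H(\theta)$.

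Multiplying $-V''-cV'=g(V)$ by $V'$ and integrating over $\mathbb{R}$ --- the boundary terms vanish because $V'\to0$ at $\pm\infty$ (at $-\infty$ by the assumed exponential convergence, at $+\infty$ by the standard asymptotics of a bounded monotone profile near its limiting equilibrium) --- gives $-c\int_{\mathbb{R}}(V')^2=\int_0^L g(v)\,\mathrm{d}v$, so $c$ and $-\int_0^L g$ share the same sign. Undoing the substitution, $\int_0^L g(v)\,\mathrm{d}v=\int_0^1 h(P)^4 f(P)\,\mathrm{d}P=:I$, so it suffices to prove $I>0$.

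To estimate $I$, I split the integral at $\theta$, where $f$ changes sign, using $h(P)\in[m,1]$ and $r(1-h(P))+1\in\big[1,\tfrac{1}{1-s}\big]$ on $(0,1)$. For the first condition I use the crude bounds $h^4(r(1-h)+1)\ge m^4$ on $(\theta,1)$ and $h^4(r(1-h)+1)\le r+1$ on $(0,\theta)$, yielding $I\ge s\big(m^4A_+-(r+1)A_-\big)$; for the second I exploit that $u\mapsto u^4(r(1-u)+1)$ is nondecreasing on $[0,1]$ when $r\le4$ to sharpen these to $\ge\tfrac{m^4}{1-s}$ on $(\theta,1)$ and $\le1$ on $(0,\theta)$, yielding $I\ge s\big(\tfrac{m^4}{1-s}A_+-A_-\big)$; here $A_+:=\int_\theta^1 P(1-P)(P-\theta)\,\mathrm{d}P$ and $A_-:=\int_0^\theta P(1-P)(\theta-P)\,\mathrm{d}P$, and both inequalities are strict because $h$ is strictly monotone. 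An elementary computation gives $A_-=\tfrac{\theta^3(2-\theta)}{12}$ and $A_+=A_-+\tfrac{1-2\theta}{12}$; substituting $2-\theta=\tfrac1s$ and $1-2\theta=\tfrac{2-3s}{s}$ and rearranging, $m^4A_+\ge(r+1)A_-$ turns into the first displayed condition and $\tfrac{m^4}{1-s}A_+\ge A_-$ into the second. In both cases $I>0$, hence $c<0$.

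I expect the reduction to be the delicate step: it requires \emph{strict} monotonicity of both profiles, so that $h$ and $H^{-1}$ are genuine $C^2$ diffeomorphisms, it requires $m>0$ so that the change of variables does not degenerate as $N\to0$, and it requires the decay of $V'$ at $\pm\infty$ to justify the integration by parts --- which is where the exponential-convergence hypothesis and the structure of the end state $(1,m)$ come in. After the reduction, only the textbook bistable balance law and the elementary integral estimates above remain.
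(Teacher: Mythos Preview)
Your proof is correct and follows essentially the same route as the paper: reduce via the Nadin--Strugarek--Vauchelet substitution $V=H(P)$ to the scalar balance identity $\operatorname{sign}(c)=-\operatorname{sign}\!\int_0^1 h(P)^4\bigl(r(1-h(P))+1\bigr)sP(1-P)(P-\theta)\,\mathrm{d}P$, split the integral at $\theta$, and bound the weight $h^4(r(1-h)+1)$ by $r+1$ and $m^4$ for condition~(1), and (using the monotonicity of $u\mapsto u^4(r(1-u)+1)$ on $[0,1]$ when $r\le4$) by $1$ and $m^4/(1-s)$ for condition~(2). Your computation $A_-=\theta^3(2-\theta)/12$, $A_+-A_-=(1-2\theta)/12$ and the subsequent algebra match the paper's exactly. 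The only cosmetic difference is that you write out the integration by parts explicitly, whereas the paper simply quotes the NSV sign formula; note that the exponential-convergence hypothesis you invoke for the vanishing of the boundary term at $-\infty$ is stated in the overarching Theorem~\ref{thm:main_sign_when_nontrivial_existence} rather than in this individual proposition.
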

\begin{proof}
    The case $s\leq\frac12$ is already solved by the previous proposition, hence we assume
    without loss of generality $s>\frac12$. Moreover, the necessary nonnegativity of the limit
    implies $r\geq\frac{s}{1-s}$; the case of equality is discarded by assumption, so that
    we have $r>\frac{s}{1-s}$. 

    Using again the relation $N=h(P)$ and the change of variable of Nadin--Strugarek--Vauchelet \cite{Nadin_Strugarek_Vauchelet}, 
    we find
\begin{equation*}
    \operatorname{sign}(c)=\operatorname{sign}\left( -\int_0^1 h(V)^4(r(1-h(V))+1)Vs(1-V)\left(V-\frac{2s-1}{s}\right)\textup{d}V \right).
\end{equation*}

    Let $I$ be the integral in the right-hand side, so that $\operatorname{sign}(c)=-\operatorname{sign}(I)$.
Using the strict positivity of $h$, namely 
\[
    \min_{[0,1]} h =\frac{1}{r}\left( r-\frac{s}{1-s} \right)>0,
\]
we deduce
\begin{align*}
    I &
    > \max_{\left[ 0,\theta \right]}\left(h^4(r(1-h)+1)\right)\int_0^\theta Vs(1-V)\left(V-\theta\right)\textup{d}V \\
    & \quad +\min_{\left[ \theta,1 \right]}\left(h^4(r(1-h)+1)\right)\int_\theta^1 Vs(1-V)\left(V-\theta\right)\textup{d}V \\
    & \geq (r+1)\int_0^\theta Vs(1-V)\left(V-\theta\right)\textup{d}V \\
    & \quad +\frac{1}{r^4}\left( r-\frac{s}{1-s} \right)^4\int_\theta^1 Vs(1-V)\left(V-\theta\right)\textup{d}V,
\end{align*}
where $\theta=(2s-1)/s$. Consequently, $c<0$ if
\begin{equation*}
    \left( 1-\frac{s}{r(1-s)} \right)^4\left( -\frac{1}{12s^3}+\frac{1}{2s^2}-\frac{s}{4}-\frac{1}{s}+\frac{5}{6} \right)\geq(r+1)\left( -\frac{1}{12s^3}+\frac{1}{2s^2}-\frac{1}{s}+\frac{2}{3} \right),
\end{equation*}
that is if 
\begin{equation*}
    \left( 1-\frac{s}{r(1-s)} \right)^4\left( 2-3s \right)\geq\left( r+1-\left( 1-\frac{s}{r(1-s)} \right)^4 \right)\left( \frac{2s-1}{s} \right)^3.
\end{equation*}
This inequality is difficult to solve explicitly, however we point out that it is both true
close to $s=\frac12$ and false as $r\to+\infty$.

The function 
\[
    V\mapsto h(V)^4(r(1-h(V))+1)
\]
admits as derivative 
\[
    V\mapsto h(V)^3(4(r+1)-5rh(V))h'(V),
\]
which is negative in $(0,1)$ if and only if $r\leq 4$. Hence, in the particular case $r\leq 4$, the function 
$h^4(r(1-h)+1)$ is decreasing, so that the inequality can be improved as
\begin{equation*}
    \left( 1-\frac{s}{r(1-s)} \right)^4\frac{1}{1-s}\left( -\frac{1}{12s^3}+\frac{1}{2s^2}-\frac{s}{4}-\frac{1}{s}+\frac{5}{6} \right)\geq \left( -\frac{1}{12s^3}+\frac{1}{2s^2}-\frac{1}{s}+\frac{2}{3} \right),
\end{equation*}
which reduces to
\begin{equation*}
    \left( 1-\frac{s}{r(1-s)} \right)^4(2-3s)\geq \left( 1-s-\left( 1-\frac{s}{r(1-s)} \right)^4 \right)\left( \frac{2s-1}{s} \right)^3.
\end{equation*}
This inequality can be rewritten as
\begin{equation*}
    \left( \frac{1-s}{s} \right)\left( 1-\left(\frac{(1-s)\left( \frac{2s-1}{s} \right)^3}{2-3s+\left( \frac{2s-1}{s} \right)^3}\right)^{1/4} \right)\geq\frac{1}{r}.
\end{equation*}
This ends the proof.
\end{proof}

\subsubsection{Nonviable gene drives}

Previous propositions already confirm that nonviable gene drives can be found only in the parameter range $s>\frac12$.

When $s\geq\frac12$, the monotonicity of $(P,N)$ implies the convergence at $+\infty$ to one constant solution among 
$(0,0)$, $\left( \frac{2s-1}{s}, 1-\frac{2s-1}{r(1-s)} \right)$
(note that this value of $n$ is positive if and only if $r>\frac{2s-1}{1-s}$) and
$\left( 1,\max\left( 0,1-\frac{s}{r(1-s)} \right) \right)$.

The positivity of the speed $c$ being already established in the first case (trivial traveling waves), we focus on the second 
and third case (nontrivial traveling waves).

\begin{prop}
    Let  $(p,n)(t,x)=(P,N)(x-ct)$ be a traveling wave solution of \eqref{sys:freq_logistic}.

    If one of the two following conditions holds true, the traveling wave is nontrivial and necessarily $c>0$:
    \begin{enumerate}
	\item $P$ converges at $+\infty$ to $\frac{2s-1}{s}$ and $s\geq\frac12$;
	\item $P$ converges at $+\infty$ to $1$, $r<\frac{s^3(3s-2)}{(2s-1)^3-s^3(3s-2)}$ and $s>\frac{2}{3}$;
	\item $P$ is increasing, $P$ converges at $+\infty$ to $1$, $r\leq 4$ and $s\geq\frac{2}{3}$.
    \end{enumerate}
\end{prop}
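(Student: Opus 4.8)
The plan is to run, for all three cases, the same three-step scheme. First, upgrade the qualitative hypotheses to strict monotonicity of $P$: in cases 2 and 3 one has $P(+\infty)=1\neq0$, and in case 1 with $s>\frac12$ one has $P(+\infty)=\theta:=\frac{2s-1}{s}>0$, so in either situation $P\not\equiv0$ and the wave is nontrivial, while for $s=\frac12$ in case 1 the right-hand state $\left(\theta,1-\frac{2s-1}{r(1-s)}\right)$ is $(0,1)$ and only a trivial Fisher--KPP wave (with $c\geq2\sqrt r>0$) is admissible. Then, at any finite $z_0$ with $P'(z_0)=0$ the profile equation gives $P''(z_0)=-(r(1-N(z_0))+1)\,sP(z_0)(1-P(z_0))(P(z_0)-\theta)$, which is $>0$ for $P(z_0)\in(0,\theta)$ and $<0$ for $P(z_0)\in(\theta,1)$ --- a strict interior extremum, impossible for a nondecreasing $P$ --- whereas $P(z_0)=\theta$ forces $P\equiv\theta$ by Cauchy--Lipschitz (legitimate since $N>0$ on $\mathbb R$), contradicting $P(-\infty)=0$, and $P$ cannot hit $0$ or $1$ at a finite point without being constant; hence $P'>0$ (in case 3 this is assumed), so $N=h(P)$ for a decreasing $\mathscr{C}^2$ bijection $h$ from the range of $P$ onto $[N(+\infty),1]$. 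Second, apply the Nadin--Strugarek--Vauchelet substitution $v=\int_0^P h(w)^2\,\textup{d}w$ used in the preceding propositions: the $P$-equation becomes $-v''-cv'=g(v)$ with $g\geq0$, and multiplying by $v'$ and integrating over $\mathbb R$ (the boundary terms vanish since $v'=h(P)^2P'\to0$ at $\pm\infty$) gives, exactly as there,
\begin{equation*}
\operatorname{sign}(c)=-\operatorname{sign}(I),\qquad I:=\int_0^{P(+\infty)} h(V)^4\bigl(r(1-h(V))+1\bigr)\,s\,V(1-V)(V-\theta)\,\textup{d}V.
\end{equation*}

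Third, show $I<0$ in each case. In case 1 the integral runs over $(0,\theta)$ only, where $V-\theta<0$ and every other factor is positive, so $I<0$ at once. In case 3, put $\phi(h)=h^4(r(1-h)+1)$, so that $\phi'(h)=h^3(4(r+1)-5rh)\geq0$ on $[0,1]$ precisely when $r\leq4$; then $V\mapsto\phi(h(V))$ is nonincreasing, and comparing it to $\phi(h(\theta))$ on $[0,\theta]$ and on $[\theta,1]$ (where $V(1-V)(V-\theta)$ is $\leq0$, resp.\ $\geq0$) gives $I\leq\phi(h(\theta))\int_0^1 sV(1-V)(V-\theta)\,\textup{d}V=\phi(h(\theta))\,\tfrac{s(1-2\theta)}{12}\leq0$, strictly if $s>\frac23$ and, at $s=\frac23$, by the strict monotonicities on the open interval. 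In case 2, $\phi$ need not be monotone, so I would split on whether $\phi(h(\theta))<1$ --- in which branch $h(\theta)$ lies below the maximizer $\tfrac{4(r+1)}{5r}$ of $\phi$, $\phi\circ h$ is sandwiched by $\phi(h(\theta))$ across $\theta$, and again $I\leq\phi(h(\theta))\tfrac{s(1-2\theta)}{12}<0$ --- or $\phi(h(\theta))\geq1$, in which case $\phi\circ h\geq1$ on $[0,\theta]$ while $\phi\circ h\leq r+1$ on $[\theta,1]$, so that
\begin{equation*}
I\leq\frac{s}{12}\Bigl((r+1)(1-\theta)^3(1+\theta)-\theta^3(2-\theta)\Bigr)<0,
\end{equation*}
the last inequality being exactly the hypothesis $r<\frac{s^3(3s-2)}{(2s-1)^3-s^3(3s-2)}$ rewritten through $\theta=\frac{2s-1}{s}$.

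The step I expect to be the real obstacle is Step 3 for case 2: the loss of monotonicity of $\phi=h^4(r(1-h)+1)$ once $r>4$ destroys the tidy ``evaluate at $h(\theta)$'' estimate that handles case 3 and forces the dichotomy above, whose second branch relies on the crude bound $\phi\leq r+1$ and therefore has to fit the stated $r$-threshold on the nose. One also has to keep an eye on the eradication regime, into which that threshold reaches: there $N(+\infty)=0$, so $h$ is not bounded away from $0$ near $V=1$; the integral formula and its derivation survive this (since $h(P)^2P'\to0$ still holds), but one must check that no estimate has silently used a positive lower bound for $h$ --- and if one is needed, $1-s+s(1-P)^2\geq1-s$ yields the supersolution comparison $N\geq1-\frac{s}{r(1-s)}$, usable wherever that quantity is positive. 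Everything else is the calculus of the cubic integrals $\int_0^\theta V(1-V)(\theta-V)\,\textup{d}V=\tfrac{\theta^3(2-\theta)}{12}$ and $\int_\theta^1 V(1-V)(V-\theta)\,\textup{d}V=\tfrac{(1-\theta)^3(1+\theta)}{12}$, together with the classical ``multiply the profile equation by its derivative and integrate'' argument.
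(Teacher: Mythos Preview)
Your proof is correct. For case 3 it coincides with the paper's argument. For cases 1 and 2 you take a genuinely different route: the paper does not invoke the Nadin--Strugarek--Vauchelet formula there but argues by comparison, using $N'P'/N\leq 0$ and $1\leq r(1-N)+1\leq r+1$ to bound $p$ above by the solution $\overline p$ of a scalar equation without advection---monostable toward $\theta$ in case 1, and bistable with piecewise reaction $\bigl((r+1)\mathbf{1}_{p\geq\theta}+\mathbf{1}_{p<\theta}\bigr)\,sp(1-p)(p-\theta)$ in case 2---and then reads off the sign of the wave speed of $\overline p$. Your approach is more unified (one integral identity handles all three cases) and your case 1 is arguably cleaner: the integrand is simply negative on $(0,\theta)$, end of story. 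The price is Step 1: because the NSV change of variable requires $P$ strictly increasing to define $h$, you must first upgrade ``nondecreasing'' to ``increasing'' via the critical-point ODE argument, whereas the paper's comparison proofs in cases 1--2 never introduce $h$ and work directly with a nondecreasing $P$. It is also worth noting that your second branch in case 2 lands on precisely the integral $\int_0^\theta sV(1-V)(V-\theta)\,\textup{d}V+(r+1)\int_\theta^1 sV(1-V)(V-\theta)\,\textup{d}V$ that the paper computes for its comparison equation, so the two methods converge on the same arithmetic; your first branch is an extra observation, absent from the paper, that the $r$-hypothesis is superfluous whenever $\phi(h(\theta))<1$---though since $h(\theta)$ is not known a priori, this does not sharpen the stated result.
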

\begin{proof}
    Without loss of generality, assume $s\geq\frac12$.

    \textbf{First condition.}
    The degenerate case $s=\frac12$ is obvious ($n(t,x)$ is a standard Fisher--KPP traveling wave), whence we focus on $s>\frac12$.

    Since $\partial_x(\log n)\cdot\partial_x p=N'P'/N\leq 0$, by comparison principle, 
    $p\leq\overline{p}$ and $c\geq\overline{c}$, where $\overline{p}$ is the solution of
\begin{equation*}
    \begin{cases}
	\partial_t \overline{p} -\partial_{xx} \overline{p} =\overline{p}s(1-\overline{p})\left(\overline{p}-\frac{2s-1}{s}\right), \\
	\overline{p}(0,x)=P(x),
    \end{cases}
\end{equation*}
and $\overline{c}$ is the asymptotic speed of spreading of $\overline{p}$.
This spreading speed is larger than or equal to the associated minimal wave speed, which is positive.

    \textbf{Second condition.}
    Since $\partial_x(\log n)\cdot\partial_x p=N'P'/N\leq 0$, by comparison principle, 
    $p\leq\overline{p}$ and $c\geq\overline{c}$, where $\overline{p}$ is the solution of
\begin{equation*}
    \begin{cases}
	\partial_t \overline{p} -\partial_{xx} \overline{p} =(r+1)\mathbf{1}_{\overline{p}\geq(2s-1)/s}\overline{p}s(1-\overline{p})\left(\overline{p}-\frac{2s-1}{s}\right)+\mathbf{1}_{\overline{p}<(2s-1)/s}\overline{p}s(1-\overline{p})\left(\overline{p}-\frac{2s-1}{s}\right), \\
	\overline{p}(0,x)=P(x),
    \end{cases}
\end{equation*}
and $\overline{c}$ is the asymptotic speed of spreading of $\overline{p}$.

The spreading speed of such a bistable equation is exactly its unique traveling wave speed
associated with an increasing profile having limits $0$ at $-\infty$ and $1$ at $+\infty$. 

Therefore the speed $\overline{c}$ has the sign of the opposite of the integral over $[0,1]$
of 
\[
V\mapsto (r+1)\mathbf{1}_{V\geq(2s-1)/s}Vs(1-V)\left(V-\frac{2s-1}{s}\right)+\mathbf{1}_{V<(2s-1)/s}Vs(1-V)\left(V-\frac{2s-1}{s}\right).
\]
After some algebra we discover that
\begin{equation*}
    \operatorname{sign}(\overline{c})=\operatorname{sign}\left((r+1)\left( s-\frac{2}{3} \right)-\frac{r}{3}\left( \frac{2s-1}{s} \right)^3\right).
\end{equation*}
Hence $\overline{c}>0$ if (and only if) 
\[
r<\frac{s^3(3s-2)}{(2s-1)^3-s^3(3s-2)}.
\]
If $s\in\left( \frac12,\frac{2}{3} \right]$, the positivity of $r$ yields a contradiction, 
so that $s>\frac{2}{3}$ is required additionally without loss of generality.
Since $p\leq\overline{p}$ implies
$c\geq\overline{c}$, this ends the proof for the second condition.

\textbf{Third condition.}
We use again the $C^2$-diffeomorphism $h$ such that $N=h(P)$ and the Nadin--Strugarek--Vauchelet formula
\cite{Nadin_Strugarek_Vauchelet}:
\begin{equation*}
    \operatorname{sign}(c)=\operatorname{sign}\left( -\int_0^1 h(V)^4(r(1-h(V))+1)Vs(1-V)\left(V-\frac{2s-1}{s}\right)\textup{d}V \right).
\end{equation*}

Recall that $V\mapsto h(V)^4(r(1-h(V))+1)$ is decreasing in $[0,1]$ if $r\leq 4$.
In such a case, it can be verified that the integral $I$ in the right-hand side above satisfies
\begin{equation*}
    I< h\left( \theta \right)^4(r(1-h(\theta))+1)\int_0^1 Vs(1-V)\left( V-\theta \right)\textup{d}V,
\end{equation*}
where $\theta=(2s-1)/s$.
Therefore, $c$ is positive if the latter integral is nonpositive, namely if $s\geq 2/3$.
\end{proof}

\section{Discussion\label{sec:discussion}}

\subsection{Main conclusions}

In \cite{Tanaka_Stone_N}, Tanaka \textit{et al.} suggested the following terminology:
\begin{itemize}
    \item monostable gene drives, susceptible of hair-trigger effect, are \textit{socially irresponsible gene drives}: the escape
	of just one individual carrying the gene drive allele from the laboratory suffices to trigger an invasion -- they correspond to threshold-independent drives;
    \item bistable gene drives, which can never invade if released accidentally in small quantities, are on the contrary \textit{socially responsible gene drives} -- they correspond to high-threshold drives.
\end{itemize}
This terminology can be combined with our viable/nonviable terminology: all socially irresponsible gene drives are viable
but socially responsible gene drives can, \textit{stricto sensu}, be viable or nonviable.
Of course, practical gene drives should all be both viable and socially responsible. In Tanaka--Stone--Nelson's density-independent model 
\cite{Tanaka_Stone_N}, this meant simply $s\in\left[ \frac12,\frac{2}{3} \right]$. However, in the model presented here, 
neither social responsibility (the drive is a threshold-dependent drive) nor viability (the drive is able to invade a wild-type population) 
can be easily and explicitly characterized with analytic methods. Numerical simulations become essential in understanding the interplay
between the two parameters $s$ and $r$.

Our model also shows that a gene drive can achieve complete eradication only if $r$ is small enough. This leads to a particulary
striking conclusion: only very specific choices of $s$ and $r$, corresponding to a small compact region in the $(s,r)$-plane,
can lead to socially responsible and viable eradication gene drives.

More generally, our model shows that the invasion of an eradication drive but also of any gene drive achieving only partial population suppression can be slowed down, stopped or even reversed by the opposing demographic advection term. Thus population dynamics matters for any gene drive affecting population size, even slightly, 
and should be taken into account. We however find that threshold-independent drives can spread spatially even when they lead to population suppression (thereby extending a result found by Beaghton \textit{et al.} \cite{beaghton_gene_2016} for driving-Y to homing-based gene drives): their monostability property remains true, their hair-trigger effect is still observed numerically and the demographic advection term only slows them down but never stops them.
For practical applications, this feature is especially dangerous if one thinks of threshold-independent eradication drives. These should definitely be approached with the highest caution.

Thus, ignoring population dynamics and only focusing on allele frequencies for spatial models of gene drive \cite{Tanaka_Stone_N} 
should be limited to replacement drives, that correspond for instance to the weak selection framework (small values of $s$) or to very
fast population dynamics (large values of $r$). We refer to the discussion in Section \ref{sec:other_models} on the relation between
our model and the two density-independent equations \eqref{eq:TSN} and \eqref{eq:no_dens_dep_at_all}. 

\subsection{Mathematical open problems worthy of attention}

Characterizing analytically the asymptotics $r\to0$ and $r\to+\infty$, namely:
\begin{itemize}
    \item the transition at $s=1/2$ when $r\simeq 0$, from nonexistent traveling waves to existent monostable traveling waves, without 
    intermediate bistable regime;
    \item the vertical asymptote of the level line $\{c_{s,r}=0\}$, its precise value in $[0.65,0.7]$ and its relation with 
    Strugarek--Vauchelet \cite{Strugarek_Vauchelet};
\end{itemize}
are two natural but difficult questions, that are left for further studies.

The main open problem with the systems \eqref{sys:dens_logistic} and \eqref{sys:freq_logistic} is the existence of nontrivial traveling waves.
We briefly explained in Section \ref{sec:analytical_results} why this problem is very difficult: on one hand, the nonexistence result of
Theorem \ref{thm:main_nonexistence} means that there will necessarily be requirements on the parameters $s$ and $r$, and on the other hand
known methods for proofs of existence are likely to be inappropriate, especially in the most interesting parameter range $s\in[1/2,2/3]$.

Some methods to construct traveling waves do not guarantee the monotonicity of the constructed profiles.
Numerically, we only observed monotonic profiles $N$ and $P$, but we did not manage to prove the 
\textit{a priori} monotonicity of these profiles. Thus at this point it remains possible 
that traveling waves with non-monotonic profiles coexist with the observed monotonic ones.

These questions of existence lead to another related crucial issue: the stability properties of traveling waves\footnote{This whole paragraph is intentionally vague regarding the notion of stability, as we do not want to enter into details here. Of course, an unambiguous clarification would be necessary before any attempt at a rigorous proof.}. Indeed, we know that trivial traveling waves always exist.
When a trivial and a nontrivial wave coexist, can we predict theoretically which one is selected? 
The numerical experiments are ambivalent here. On one hand, they lead naturally to the following (rough) conjecture:
\begin{conj}
In the whole parameter region where monotonic nontrivial traveling waves are numerically selected, 
\begin{enumerate}
    \item trivial traveling waves are unstable;
    \item monotonic nontrivial traveling waves exist and are stable.
\end{enumerate}
\end{conj}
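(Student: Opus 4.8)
We sketch a possible route toward this conjecture, stressing that (as the footnote warns) the first task is to fix a precise notion of stability; throughout we work in the co-moving frame $\xi = x - ct$ and, for the essential spectrum, in an exponentially weighted space $X_\alpha$ with weight $e^{\alpha\xi}$, the exponent $\alpha>0$ chosen --- as is standard for KPP-type invasions --- so that the instability of the empty state $n=0$ is pushed out of the relevant part of the spectrum. Two structural facts drive the analysis. First, linearising the $p$-equation of \eqref{sys:freq_logistic} about a \emph{trivial} wave $(0,N)$, with $N$ a fixed Fisher--KPP profile, yields the \emph{autonomous} scalar equation
\[
    \partial_t q = q'' + \bigl( c + 2(\log N)' \bigr)\, q' + \bigl( r(1-N(\xi)) + 1 \bigr)(1-2s)\, q ,
\]
where $'$ denotes the $\xi$-derivative --- the coupling to the $n$-perturbation being quadratic --- so that the stability of a trivial wave in the $p$-direction is controlled by a single scalar parabolic operator. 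Second, for a monotone \emph{nontrivial} wave the Nadin--Strugarek--Vauchelet substitution $N = h(P)$ already used in Section~\ref{sec:analytical_results} reduces the wave problem to a scalar equation $\partial_t v = v'' + g(v)$, degenerate-monostable when $s \le \tfrac12$ and bistable when $s > \tfrac12$.

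\textbf{Part (1): instability of the trivial wave.} When $s < \tfrac12$ the zeroth-order coefficient $(r(1-N)+1)(1-2s)$ above is positive, the scalar operator has positive principal eigenvalue, and linear --- hence nonlinear --- instability follows in the usual way. When $s > \tfrac12$, \textit{i.e.}, in the conjecture's region, that coefficient is negative, infinitesimal $p$-perturbations in fact decay, and the instability is genuinely nonlinear: it must be read as non-selection of the trivial wave. The plan is to exhibit, whenever a nontrivial wave exists, a compactly supported sub-solution of the $p$-equation that invades --- a pushed mechanism of the kind analysed in the proof of Theorem~\ref{thm:main_sign_when_nontrivial_existence} --- then to use the comparison principle and the monotone structure of the $n$-equation to squeeze the solution between the wild-type steady state and the advancing $p$-front, and finally a convergence-to-wave argument to conclude that the solution leaves every neighbourhood of $(0,N)$ and tends, up to translation, to the nontrivial wave. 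In the eradication sub-region a cleaner variant is available: a perturbation placed near the edge of the populated zone sets off a drive front of (negative) speed $c$ that must collide with the wild-type recolonisation front of speed $2\sqrt r$, which is plainly incompatible with stability of the trivial wave.

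\textbf{Part (2): existence and stability of the nontrivial wave.} Existence is, as stressed above, the principal open problem, so one can only propose a scheme: read $N = h(P)$ as a fixed-point equation --- given a candidate decreasing $\mathscr{C}^2$ diffeomorphism $h$, solve the resulting scalar (degenerate-monostable or bistable) traveling-wave problem for $P$, reconstruct $N$ from the $n$-equation, and iterate --- and look for a fixed point by a Schauder argument; or, away from the eradication zone and for moderate $s$, where the system has the KPP structure of \cite{Girardin_2016_2}, obtain the wave by global bifurcation off the trivial branch as $r$ crosses the monostability threshold. Granting existence, stability would follow the classical programme: linearise in the co-moving frame, show in $X_\alpha$ that the essential spectrum lies in $\{\operatorname{Re}\lambda < 0\}$ and that the point spectrum reduces to the simple translation eigenvalue at $0$ --- using the strict monotonicity of $N$ and $P$ and a Sturm-type positivity argument on the reduced scalar equation, whose wave is stable both in the degenerate-monostable and in the bistable case --- and then invoke the nonlinear stability theory of Sattinger and Henry, or equivalently a Fife--McLeod squeezing argument adapted to systems.

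\textbf{Expected main obstacle.} Everything genuinely hard is in Part (2). First, the existence of the nontrivial wave must fail for some $(s,r)$ by Theorem~\ref{thm:main_nonexistence}, so any construction has to be sharply parameter-sensitive, and the classical monotone-iteration and phase-plane tools break down because $(P,N)$ has changing monotonicity and the system loses $\mathscr{C}^1$ regularity as $n \to 0$ in the eradication regime. Second, even granting existence, controlling the essential spectrum of the linearisation near the degenerate state $n = 0$ is delicate, precisely where the substitution $N=h(P)$ also degenerates. And underlying both parts, one must first choose the notion of stability carefully enough that the nonlinear instability of Part (1) and the asymptotic stability of Part (2) become the two halves of a single attractivity dichotomy.
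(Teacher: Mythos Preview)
The statement is a \emph{conjecture}: the paper offers no proof, presents it explicitly as a rough open problem motivated by numerics, and immediately stresses that even the notion of stability is left intentionally vague. So there is nothing in the paper to compare your attempt against; what you have written is not a proof but a research programme, and to your credit you say so. Judged as a roadmap it is thoughtful and correctly identifies the obstacles the paper itself lists (parameter-sensitive existence, loss of $\mathscr{C}^1$ regularity at $n=0$, degeneracy in the eradication regime). Your linearisation of the $p$-equation about a trivial wave is correct, and the dichotomy you draw at $s=\tfrac12$ --- linear instability for $s<\tfrac12$, only nonlinear non-selection for $s>\tfrac12$ --- is exactly the right observation.

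There is, however, one genuine gap in Part~(2) that would bite if you tried to carry the programme out. You propose to control the point spectrum of the linearisation about the nontrivial wave ``using \ldots\ a Sturm-type positivity argument on the reduced scalar equation'' obtained via $N=h(P)$. But the Nadin--Strugarek--Vauchelet substitution is a relation satisfied by the \emph{wave profile} only: generic perturbations $(q,m)$ of $(P,N)$ are not constrained to lie on the graph $m=h'(P)q$, so the linearised operator for the full $2\times2$ system does \emph{not} reduce to the linearisation of the scalar equation $\partial_t v=v''+g(v)$. Stability of the scalar wave therefore says nothing direct about the point spectrum of the coupled system; you would need either a genuine spectral analysis of the $2\times2$ operator (with no comparison structure to fall back on, since the system is not cooperative) or a separate argument showing that the ``off-graph'' direction is damped. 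This is precisely the kind of difficulty that makes the conjecture open, and your sketch does not yet address it.
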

But on the other hand, 
it is numerically very challenging to distinguish cases where only trivial waves exist from cases where the two kinds coexist
but the trivial ones are the only stable ones. Hence, at this point, it remains unclear whether the whole bottom right region of 
Figure \ref{fig:heatmap_large_r} corresponds to a nonexistence result or not. We strongly believe that Theorem \ref{thm:main_nonexistence} is
not optimal, but we do not know if we can expect the optimal result to cover the whole region where $c_{s,r}=2\sqrt{r}$.

Finally, it would of course be interesting to improve our theoretical knowledge on the sign of the speed of nontrivial traveling
waves. A direction that might be fruitful is the following: find upper or lower estimates for the decreasing function 
$h:[0,1]\to[0,1]$ defined by $h(P)=N$ and plug these estimates into the Nadin--Strugarek--Vauchelet formula \cite{Nadin_Strugarek_Vauchelet} 
that we used repeatedly in the proofs:
\begin{equation*}
    \operatorname{sign}(c)=\operatorname{sign}\left( -\int_0^1 h(V)^4(r(1-h(V))+1)Vs(1-V)\left(V-\frac{2s-1}{s}\right)\textup{d}V \right).
\end{equation*}
Let us point out that a second similar formula can be obtained by integration by parts of the equation on $N$ multiplied by $N'$:
\begin{equation*}
    \operatorname{sign}(c)=\operatorname{sign}\left( \frac{r}{6}(1-(1-s)n_\star^3)-\int_0^1 s(1-V)h(V)^2\left(r\left(1-\frac{2h(V)}{3}\right)+1\right)\textup{d}V \right),
\end{equation*}
where $n_\star=\max\left(0,1-\frac{s}{r(1-s)}\right)$.
Unfortunately we did not manage to deduce anything interesting from this formula. But, again, it might prove fruitful once estimates on $h$ are
established.

\subsection{Effect of stochasticity}

We checked the robustness of our results by running stochastic simulations of the model given in system~\eqref{sys:dens_logistic}.  
More precisely, our stochastic model is a variant of our deterministic model implemented as a modified Gillespie algorithm, 
with Brownian motion of individuals, numbers of offspring drawn according to a Poisson law and death times drawn according 
to an exponential law. Each simulation is stopped when one allele completely disappears from the spatial domain or when a 
given final time is reached (even if no allele has disappeared yet). Space was discretized into $100$ demes connected by 
dispersal to the nearest neighboring sites, with emigration probability equal to $0.1$; the carrying capacity of a deme is 
$K = 1000$. The simulation codes are currently available online at \url{\urlrepo}.

These simulations confirmed the results of the deterministic version of the model (compare Figure~\ref{fig:stoch} to Figure \ref{fig:heatmap_large_r}).

\begin{figure}
    \centering
    \includegraphics[width = 8cm]{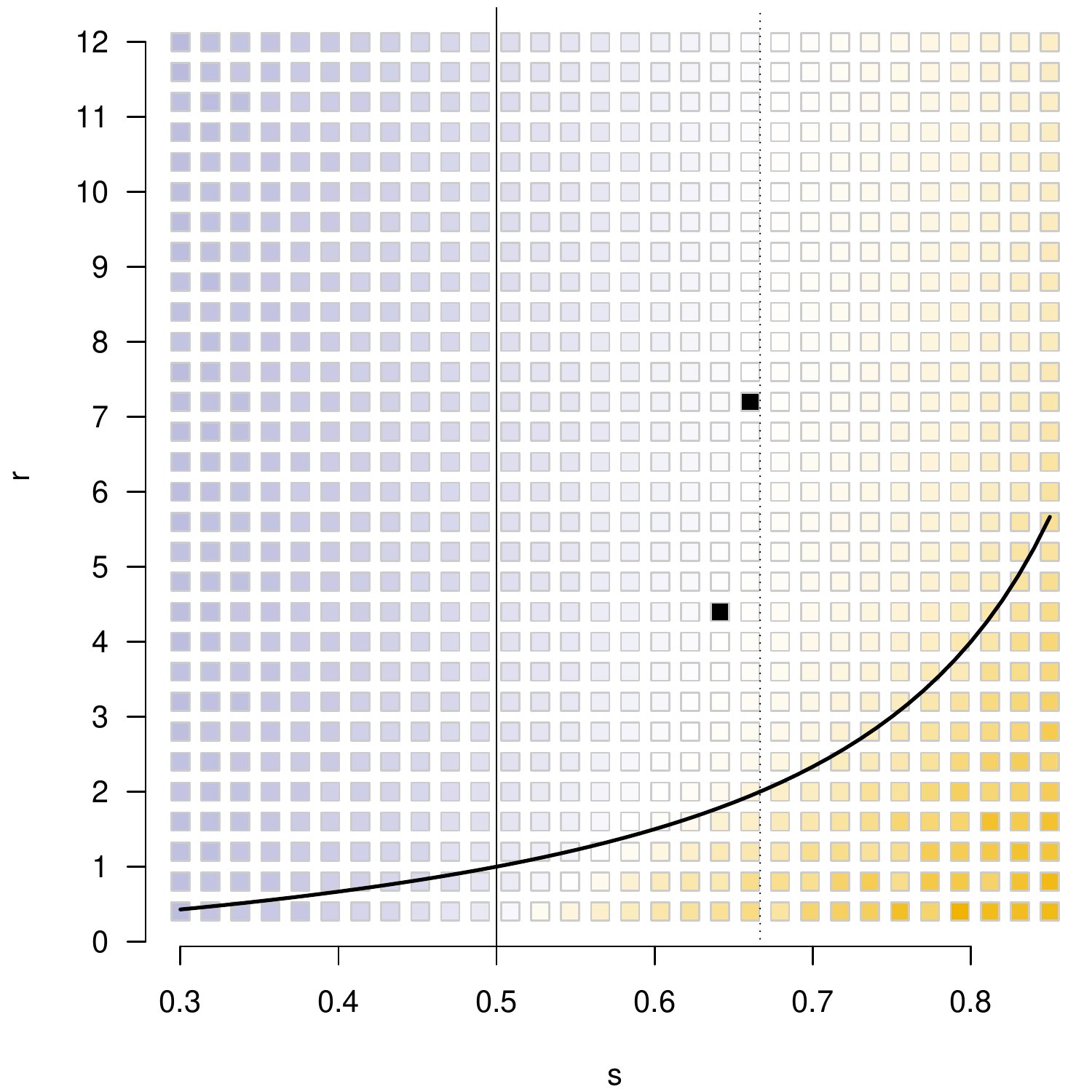}
    \caption{Heatmap in the $(s,r)$ plane for the stochastic simulations. Each point is the color-coded outcome of one stochastic 
    simulation run with the corresponding $(s, r)$ parameters. Blue: the simulation is stopped by the extinction of $O$, yellow: 
    the simulation is stopped by the extinction of $D$, black: no extinction occurs before the final time. The more intense the 
    color, the faster the extinction of the last individual carrying the first lost allele. The thick curve represents the eradication threshold. }
    \label{fig:stoch}
\end{figure}

\subsection{Genericity of this ``wave reversal by opposing demographic advection'' phenomenon}

\subsubsection{Other choices of birth and death rates}
Numerically, several other cases were considered for $B$ and $N$.
\begin{enumerate}
    \item Constant death rate, weak or strong Allee effect on the wild-type dynamics with threshold $a<1$: $D(n)=1$,
	$B(n)=\max\left( r(1-n)(n-a)+1,0 \right)$ (as a birth rate, $B$ has to be nonnegative). 
        The Allee effect is weak if $a\in(-1,0]$ and strong if $a\in(0,1)$ 
	(if $a\leq -1$, there is again no Allee effect). The eradication condition is $r<\frac{4s}{(1-s)(1-a)^2}$.
    \item Constant birth rate, logistic wild-type growth: $B(n)=r+1$, $D(n)=1+rn$. Again, the eradication condition
	is $r<\frac{s}{1-s}$.
    \item Constant birth rate, weak or strong Allee effect with threshold $a\in(-1,1)$ on the wild-type growth: $B(n)=r+1$,
	$D(n)=1+r+r(n-1)(n-a)$. The eradication conditions are $(1-a)^2\leq 4s$ or $r<\frac{4s}{(1-a)^2-4s}$. Note that the first 
	condition does not depend on $r$.
\end{enumerate}

The resulting heatmaps can be observed on Figure \ref{fig:other_B_D}. Interestingly, many features of Figure \ref{fig:heatmap_large_r}
remain true: the $0$-level set is an increasing graph, it is stuck in the parameter range defined by the two thresholds $s=1/2$
and $s=2/3$ of the density-independent equation \eqref{eq:no_dens_dep_at_all}. However, the strong Allee effect on $B$ 
seems to break the strict convexity of the $0$-level set and the strong Allee effect on $D$ seems to simply the picture in the 
sense that all nonviable gene drives satisfy $P=0$.

We point out that many analytical techniques used in the proofs of Theorems \ref{thm:main_nonexistence} and 
\ref{thm:main_sign_when_nontrivial_existence} can be successfully applied to those cases. For the sake of brevity, we do
not give details.

\begin{figure}
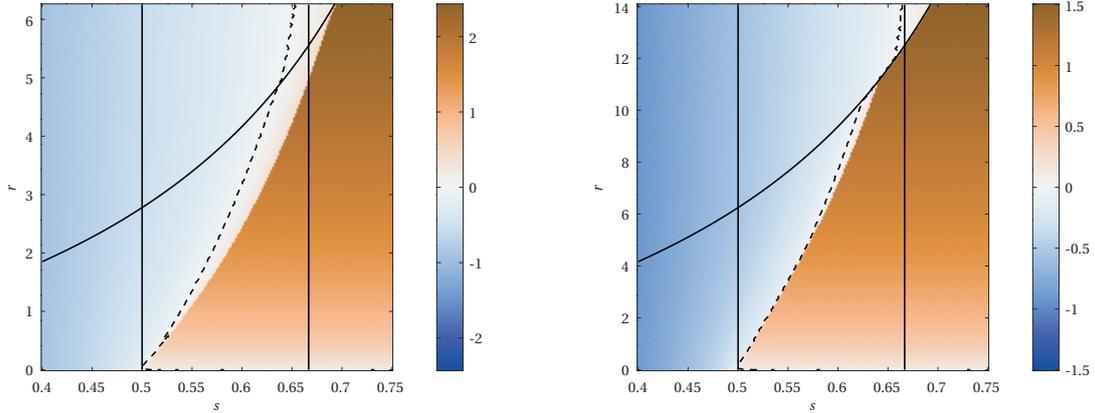
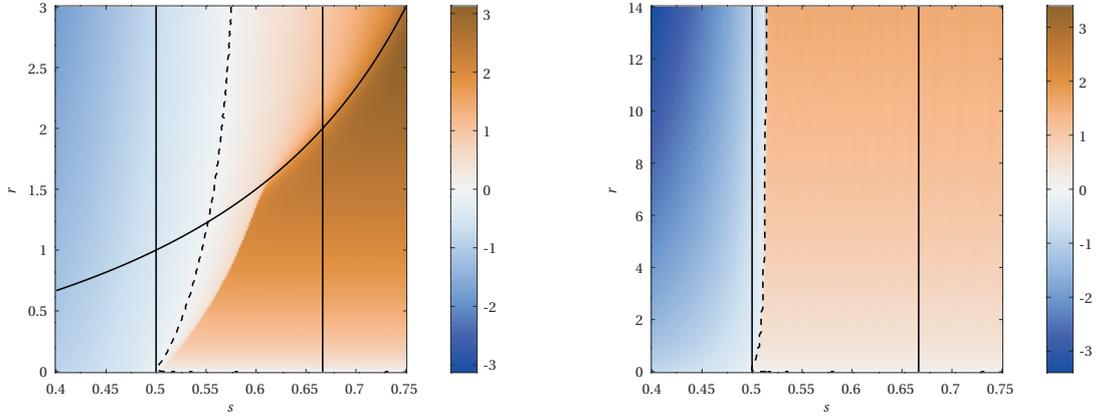

\begin{subfigure}[t]{.49\textwidth}
    \resizebox{\textwidth}{!}{\input{heatmap_Allee_effect_a_-2_10E-1.tex}}
    \caption{Weak Allee effect on $B$ ($a=-0.2$), constant $D$.}
\end{subfigure}
\hfill
\begin{subfigure}[t]{.49\textwidth}
    \resizebox{\textwidth}{!}{\input{heatmap_Allee_effect_bis_a_2_10E-1.tex}}
    \caption{Strong Allee effect on $B$ ($a=0.2$), constant $D$.}
\end{subfigure}

\begin{subfigure}[t]{.49\textwidth}
    \resizebox{\textwidth}{!}{\input{heatmap_D.tex}}
    \caption{Constant $B$, logistic $D$.}
\end{subfigure}
\hfill
\begin{subfigure}[t]{.49\textwidth}
    \resizebox{\textwidth}{!}{\input{heatmap_Allee_effect_a_2_10E-1_D.tex}}
    \caption{Constant $B$, strong Allee effect on $D$ ($a=0.2$). All drives are eradication drives ($4s\geq 1.6>(1-a)^2=0.16$).}
\end{subfigure}
\caption{Heatmaps of $c_{s,r}$ values in the $(s,r)$ plane for different choices of $B$ and $D$ in \eqref{sys:full_densities}.
Dashed curves: $0$-level set.
Strictly convex curves: eradication threshold.
The associated density-independent dynamics are given by \eqref{eq:no_dens_dep_at_all}, with thresholds $s=1/2$ and
$s=2/3$ (see Figure \ref{fig:heatmap_no_dens_dep_at_all}).}
\label{fig:other_B_D}
\end{figure}

\subsubsection{Other gene drive models}

The same numerical method can be applied to different gene drive models: different particular cases of the system
\eqref{sys:initial_full_system}, particular cases of the analogous model obtained when assuming that the gene conversion (homing)
occurs in the germline \cite{Rode_Debarre_2020, noble_evolutionary_2017}, or entirely different models 
\cite{Tanaka_Stone_N, Vella_2017, Deredec_2008, Unckless_2015}. 
We illustrate this possibility with two examples on Figure \ref{fig:other_models}. Again, the main features of 
Figure \ref{fig:heatmap_large_r} are preserved.

However we point out that our analytical tools strongly rely upon the fact that the system is a two-component system: most of them would be unapplicable in a three-component setting. This implies that models where three genotypes $OO$, $OD$ and $DD$ have to be tracked simultaneously cannot be studied this way. Our analytical approach seems to be limited to gene drives with perfect conversion and conversion in the zygote, for which heterozygous individuals can be safely ignored, or to models reduced by means of some approximation (e.g. Hardy-Weinberg proportions as in discrete-time models).

\begin{figure}
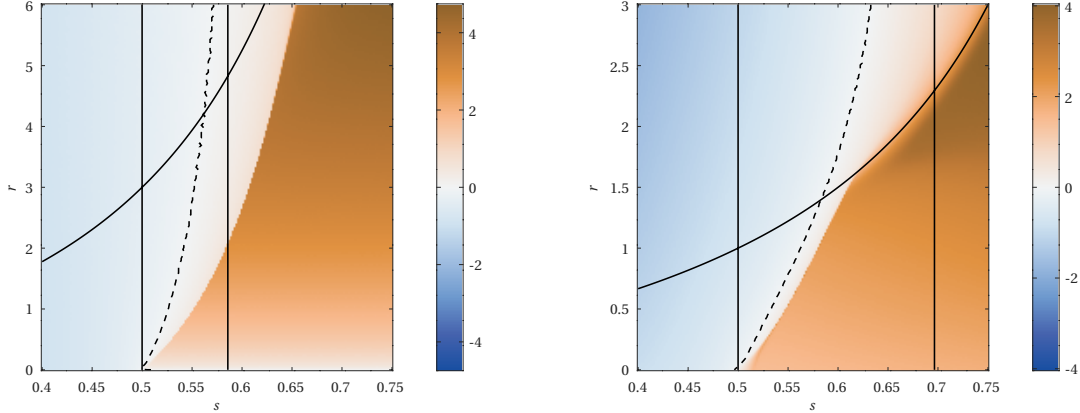

\begin{subfigure}[t]{.49\textwidth}
    \resizebox{\textwidth}{!}{\input{heatmap_fecundity.tex}}
    \caption{In \eqref{sys:dens_logistic}, selection acts on fecundity instead of survival ($\beta_D=1-s$, $\omega_D=1$).}
\end{subfigure}
\hfill
\begin{subfigure}[t]{.49\textwidth}
    \resizebox{\textwidth}{!}{\input{heatmap_freq_TSN.tex}}
    \caption{In \eqref{sys:freq_logistic}, the equation on $p$ is replaced by \eqref{eq:GCD}.}
\end{subfigure}
\caption{Heatmaps of $c_{s,r}$ values in the $(s,r)$ plane for different gene drive models. Dashed curves: $0$-level set.
Strictly convex curves: eradication threshold.
Left vertical line: monostability--bistability threshold for the density-independent dynamics. 
Right vertical line: $0$-level set of the speed of the density-independent dynamics.}
\label{fig:other_models}
\end{figure}

\subsubsection{Extension to a bistable mosquito--Wolbachia model}

Finally, the same ideas can also be applied to different models from mathematical biology that share the common feature
of leading to a two-component bistable reaction--diffusion system where one population with low carrying capacity tries to invade 
another population with high carrying capacity. Indeed, such a model will again involve an opposing advection term of the form 
$-2\nabla\left( \log n \right)\cdot\nabla p$ and it should be clear at this point that this term is the main responsible for
the qualitative features of Figure \ref{fig:heatmap_large_r}.

As an example, we consider a Wolbachia infection in a mosquito population, as described in 
\cite{Barton_Turelli,StrugarekPhD,Nadin_Strugarek_Vauchelet}: 
the infection is transmitted vertically, perfectly, through mothers. The infection reduces fertility of mothers by a 
factor $f_w$. Hatching rate is reduced by a factor $\omega_H$ for crosses involving an infected father and an uninfected mother. All other crossings have the same hatching rates. Assuming equal sex ratios, no sex differences in death rate and no
differences in diffusion rate, we have
\begin{equation*}
\begin{cases}
\frac{\partial n_w}{\partial t}-\Delta n_w = \left( \left(\frac{n_w}{n}\right)^2 f_w + \frac{n_w}{n}\frac{n_s}{n}f_w \right)B(n)n - D(n) n_w \\
\frac{\partial n_s}{\partial t}-\Delta n_s = \left( \left(\frac{n_s}{n}\right)^2 + \frac{n_w}{n}\frac{n_s}{n}\omega_H \right)B(n)n - D(n) n_s,
\end{cases}
\end{equation*}
where $n_w$ and $n_s$ are the densities of infected and uninfected individuals, respectively, and $n = n_w + n_s$ is
the total population density. Rewriting this system in terms of $n$ and $p = \frac{n_w}{n_w + n_s}$, we obtain
\begin{equation*}
\begin{cases}
\frac{\partial n}{\partial t}-\Delta n = \left( 1 - p (2 - f_w - \omega_H) + p^2 (1-\omega_H) \right) B(n) n - D(n) n \\
\frac{\partial p}{\partial t}-\Delta p-2\nabla\left( \log n \right)\cdot\nabla p = B(n)p (1-p) \left(p (1-\omega_H) - (1-f_w)\right) 
\end{cases}
\end{equation*}
The interior equilibrium for $p$, 
\begin{equation}
p^* = \frac{1 - f_w}{1 - \omega_H},
\end{equation}
is admissible (\textit{i.e.}, between $0$ and $1$) when $\omega_H \leq f_w$; it is unstable. 
With this example, we can indeed have a wave towards lower densities 
(a fully infected population is smaller than a fully uninfected population). 

Numerical simulations for this model are displayed on Figure \ref{fig:Wolbachia}. Again, this figure shares many similarities with
Figure \ref{fig:heatmap_large_r}. This illustrates once more that the important term in this whole class of models when concerned about
spreading properties is the opposing advection term.

\begin{figure}
    \resizebox{.49\textwidth}{!}{\input{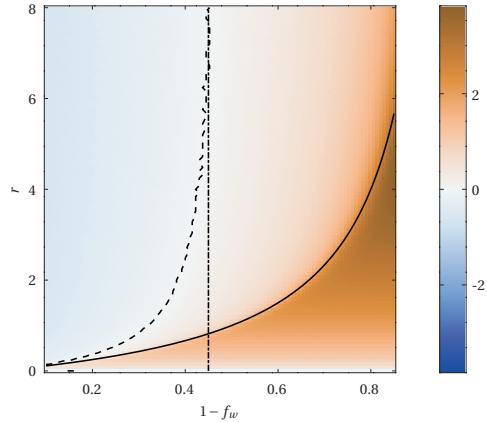}}
    \caption{Heatmaps of $c_{f_{w},r}$ values in the $(1-f_w,r)$ plane for the Wolbachia system with $\omega_H = 0.1$. 
    Dashed curve: $0$-level set.
    Solid curve: level of $r$ below which Wolbachia leads to extinction.
    Dashed-dotted line: $0$-level set of the speed of the density-independent dynamics.}
    \label{fig:Wolbachia}
\end{figure}

\subsection{Biological implications} 
We found that a threshold-independent homing drive would spread spatially, even if it leads to the eradication of the target population. This extends the results of Beaghton \textit{et al.} \cite{beaghton_gene_2016}, obtained by a driving-Y, to another type of gene drive. For threshold-dependent drives, however, we find that the parameter space allowing for spatial spread is more restricted than what is found in models neglecting changes in population size and the resulting opposing demographic advection of individuals \cite{Tanaka_Stone_N}. These simpler models find that a drive spreads as long as the release threshold is sufficiently low (lower than $1/2$ in \cite{barton_dynamics_1979, Barton_Turelli}, lower than a value close to $0.565$ in \cite{Tanaka_Stone_N}) and the initial condition is sufficiently large. In contrast, we find that these conclusions remain true for pure replacement drives but that spatial spread is limited for suppression or eradication drives, all the more as suppression is strong. These conclusions can be extended to other, non-drive, systems for population control such as Wolbachia. 

The parameter range displayed on Figure \ref{fig:heatmaps} and other figures for the Malthusian growth rate, namely $r\in[0,12]$, 
seems to be biologically realistic and relevant. Indeed, we found $r=11.5$ in \cite{White_Rohani_Sait_2010} for \textit{Aedes aegypti}, 
$2\leq r\leq 9.8$ in \cite{Dye_1984} for \textit{Aedes aegypti}, $r=1.4$ in \cite{Mireji_Beier_2020} for 
\textit{Anopheles gambiae}, $r=0.2$ in \cite{Haq_Reisen_Aslamkhan_1981} for \textit{Anopheles stephensi}
\footnote{From \cite{White_Rohani_Sait_2010,Dye_1984,Mireji_Beier_2020,Haq_Reisen_Aslamkhan_1981}, $r$ is deduced by using the 
formula $r=b/d-1$, where $b$ is the constant adult reproduction rate (namely, the birth rate corrected by 
taking into account juvenile mortality), $d$ is the constant adult death rate, and $b$ and $d$ are given in the same unit, say day$^{-1}$.
Compared with the classical formula $r=b-d$, here we divide by $d$ to account for the fact that in system \eqref{sys:dens_logistic} the
time unit is modified so that the death rate is scaled to $1$.}, 
$r=1.3$ in \cite{Roques_Bonnefon_2016} for \textit{Aedes albopictus}.

We have hence characterised a new reason for the failure of spatial spread of suppression drives, in the form of opposing demographic advection. This phenomenon was expected given previous work on spatial dynamics of alleles (as reviewed in \cite{Dhole_Lloyd_Gould_2020}), but we clarify conditions under which it occurs. Other models of spatial spread, and in particular individual-based models, had already identified some reasons why the spatial spread of a suppression drive may fail. If the drive suppresses the local population too much and if the density target population is spatially heterogeneous, the drive may go extinct locally with the eradication of a local subpopulation before it could spread to other locations \cite{north_modelling_2013}. Strategies relying on the eradication of the target population are also limited by the potential recolonization of emptied locations by wild-type individuals \cite{north_modelling_2013, north_modelling_2019, champer_suppression_2019} (such recolonizations can also be observed in our stochastic simulations). Finally, the evolution of resistance to the drive itself, which already hinders the success of gene drives in well-mixed populations \cite{Unckless_2017}, also affects their spatial spread \cite{beaghton_requirements_2017}. 

Our model was derived under limiting assumptions, including a $100\%$ homing rate, and either homing taking place very early in development or the drive being dominant. Gene drives currently being designed in laboratories do not exactly match these assumptions. While we are pessimistic that analytical results can be obtained when these assumptions are relaxed, future numerical or computational (individual-based) studies will be useful to assess the generality of our findings. The results of individual-based simulations of the spatial spread of underdominance gene drive systems \cite{champer_population_2020} are encouraging. The authors indeed found limited spatial spread even in the case of $p^*<1/2$, in simulations initiated with the left half of the arena occupied by drive homozygotes and the right half by wild-type homozygotes (2L2T system; Fig. 3F) -- initial conditions similar to ours.   

To conclude, this work highlights the importance of not neglecting demographic dynamics in evolutionary models when the spread of an allele affect the size of a population. 

\section*{Acknowledgements}
We are grateful to the INRAE MIGALE bioinformatics facility (MIGALE, INRAE, 2020. Migale bioinformatics Facility, doi: 10.15454/1.5572390655343293E12) for providing computing resources. FD is funded by an Agence Nationale de la Recherche JCJC grant TheoGeneDrive ANR-19-CE45-0009-01.

%\typeout{}
\bibliographystyle{plain}
\bibliography{ref,floRefs}

\begin{thebibliography}{10}

\bibitem{alphey_opinion_2020}
Luke~S. Alphey, Andrea Crisanti, Filippo~(Fil) Randazzo, and Omar~S. Akbari.
\newblock Opinion: {Standardizing} the definition of gene drive.
\newblock {\em Proceedings of the National Academy of Sciences},
  117(49):30864--30867, December 2020.

\bibitem{altrock_using_2010}
Philipp~M. Altrock, Arne Traulsen, R.~Guy Reeves, and Floyd~A. Reed.
\newblock Using underdominance to bi-stably transform local populations.
\newblock {\em Journal of Theoretical Biology}, 267(1):62--75, November 2010.

\bibitem{barton_dynamics_1979}
N~H Barton.
\newblock The dynamics of hybrid zones.
\newblock {\em Heredity}, 43(3):341--359, December 1979.

\bibitem{Barton_Turelli}
Nick~H. Barton and Michael Turelli.
\newblock Spatial waves of advance with bistable dynamics: Cytoplasmic and
  genetic analogues of {A}llee effects.
\newblock {\em The American Naturalist}, 178(3):E48--E75, 2011.
\newblock PMID: 21828986.

\bibitem{beaghton_gene_2016}
Andrea Beaghton, Pantelis~John Beaghton, and Austin Burt.
\newblock Gene drive through a landscape: {Reaction}–diffusion models of
  population suppression and elimination by a sex ratio distorter.
\newblock {\em Theoretical Population Biology}, 108:51--69, April 2016.

\bibitem{beaghton_requirements_2017}
Andrea Beaghton, Andrew Hammond, Tony Nolan, Andrea Crisanti, H.~Charles~J.
  Godfray, and Austin Burt.
\newblock Requirements for {Driving} {Antipathogen} {Effector} {Genes} into
  {Populations} of {Disease} {Vectors} by {Homing}.
\newblock {\em Genetics}, 205(4):1587--1596, April 2017.

\bibitem{burt_site-specific_2003}
Austin Burt.
\newblock Site-specific selfish genes as tools for the control and genetic
  engineering of natural populations.
\newblock {\em Proceedings of the Royal Society of London. Series B: Biological
  Sciences}, 270(1518):921--928, May 2003.

\bibitem{champer_cheating_2016}
Jackson Champer, Anna Buchman, and Omar~S. Akbari.
\newblock Cheating evolution: engineering gene drives to manipulate the fate of
  wild populations.
\newblock {\em Nature Reviews Genetics}, 17(3):146--159, March 2016.

\bibitem{champer_suppression_2019}
Jackson Champer, Isabel Kim, Samuel~E. Champer, Andrew~G. Clark, and Philipp~W.
  Messer.
\newblock Suppression gene drive in continuous space can result in unstable
  persistence of both drive and wild-type alleles.
\newblock preprint, Ecology, September 2019.

\bibitem{champer_population_2020}
Jackson Champer, Joanna Zhao, Samuel~E. Champer, Jingxian Liu, and Philipp~W.
  Messer.
\newblock Population {Dynamics} of {Underdominance} {Gene} {Drive} {Systems} in
  {Continuous} {Space}.
\newblock {\em ACS Synthetic Biology}, 9(4):779--792, April 2020.

\bibitem{Deredec_2008}
Anne Deredec, Austin Burt, and H.~C.~J. Godfray.
\newblock The population genetics of using homing endonuclease genes in vector
  and pest management.
\newblock {\em Genetics}, 179(4):2013--2026, 2008.

\bibitem{Dhole_Lloyd_Gould_2020}
Sumit {Dhole}, Alun~L {Lloyd}, and Fred {Gould}.
\newblock {Gene drive dynamics in natural populations: The importance of
  density dependence, space and sex}.
\newblock {\em Annual review of ecology, evolution, and systematics},
  51(1):505--531, November 2020.

\bibitem{Dye_1984}
Christopher Dye.
\newblock Models for the population dynamics of the yellow fever mosquito,
  {A}edes aegypti.
\newblock {\em Journal of Animal Ecology}, 53(1):247--268, 1984.

\bibitem{Octave}
John~W. Eaton, David Bateman, S{\o}ren Hauberg, and Rik Wehbring.
\newblock {\em {GNU Octave} version 5.1.0 manual: a high-level interactive
  language for numerical computations}, 2019.

\bibitem{efsa_panelon_genetically_modified_organisms_gmo_adequacy_2020}
{EFSA Panel on Genetically Modified Organisms (GMO)}, Hanspeter Naegeli,
  Jean‐Louis Bresson, Tamas Dalmay, Ian~C Dewhurst, Michelle~M Epstein,
  Philippe Guerche, Jan Hejatko, Francisco~J Moreno, Ewen Mullins, Fabien
  Nogué, Nils Rostoks, Jose~J Sánchez~Serrano, Giovanni Savoini, Eve
  Veromann, Fabio Veronesi, Michael~B Bonsall, John Mumford, Ernst~A Wimmer,
  Yann Devos, Konstantinos Paraskevopoulos, and Leslie~G Firbank.
\newblock Adequacy and sufficiency evaluation of existing {EFSA} guidelines for
  the molecular characterisation, environmental risk assessment and
  post‐market environmental monitoring of genetically modified insects
  containing engineered gene drives.
\newblock {\em EFSA Journal}, 18(11), November 2020.

\bibitem{Esvelt_2014}
Kevin~M. Esvelt, Andrea~L. Smidler, Flaminia Catteruccia, and George~M. Church.
\newblock Emerging {T}echnology: {C}oncerning {RNA}-guided gene drives for the
  alteration of wild populations.
\newblock {\em eLife}, 3:e03401, jul 2014.

\bibitem{Fisher_1937}
Ronald~Aylmer Fisher.
\newblock The wave of advance of advantageous genes.
\newblock {\em Annals of eugenics}, 7(4):355--369, 1937.

\bibitem{Girardin_2016_2}
L\'{e}o Girardin.
\newblock Non-cooperative {Fisher{--}KPP} systems: traveling waves and
  long-time behavior.
\newblock {\em Nonlinearity}, 31(1):108, 2018.

\bibitem{Girardin_2019}
L\'{e}o Girardin.
\newblock The effect of random dispersal on competitive exclusion -- {A}
  review.
\newblock {\em Math. Biosci.}, 318:108271, 2019.

\bibitem{Girardin_Calvez_Debarre}
L\'{e}o Girardin, Vincent Calvez, and Florence D\'{e}barre.
\newblock Catch me if you can: A spatial model for a brake-driven gene drive
  reversal.
\newblock {\em Bulletin of Mathematical Biology}, 81(12):5054--5088, Dec 2019.

\bibitem{greenbaum_designing_2019}
Gili Greenbaum, Marcus~W. Feldman, Noah~A. Rosenberg, and Jaehee Kim.
\newblock Designing gene drives to limit spillover to non-target populations.
\newblock preprint, Evolutionary Biology, June 2019.

\bibitem{Haq_Reisen_Aslamkhan_1981}
Nadira Haq, William~K. Reisen, and Muhammad Aslamkhan.
\newblock The effects of {N}osema algerae on the horizontal life table
  attributes of {A}nopheles stephensi under laboratory conditions.
\newblock {\em Journal of Invertebrate Pathology}, 37(3):236 -- 242, 1981.

\bibitem{KPP_1937}
Andrei~N. Kolmogorov, I.~G. Petrovsky, and N.~S. Piskunov.
\newblock {\'Etude} de l'\'equation de la diffusion avec croissance de la
  quantit\'e de mati\`ere et son application \`a un probl\`eme biologique.
\newblock {\em Bulletin Universit\'e d'\'Etat {\`{a}} Moscou}, 1:1--25, 1937.

\bibitem{marshall_confinement_2012}
John~M. Marshall and Bruce~A. Hay.
\newblock Confinement of gene drive systems to local populations: {A}
  comparative analysis.
\newblock {\em Journal of Theoretical Biology}, 294:153--171, February 2012.

\bibitem{Mireji_Beier_2020}
P.~O. Mireji, J.~Keating, A.~Hassanali, C.~M. Mbogo, M.~N. Muturi, J.~I.
  Githure, and J.~C. Beier.
\newblock Biological cost of tolerance to heavy metals in the mosquito
  {A}nopheles gambiae.
\newblock {\em Medical and Veterinary Entomology}, 24(2):101--107, 2010.

\bibitem{Nadin_Strugarek_Vauchelet}
Gr\'{e}goire Nadin, Martin Strugarek, and Nicolas Vauchelet.
\newblock Hindrances to bistable front propagation: application to
  \textit{{W}olbachia} invasion.
\newblock {\em J. Math. Biol.}, 76(6):1489--1533, 2018.

\bibitem{NASEM_2016}
Engineering National Academies~of Sciences and Medicine.
\newblock {\em Gene Drives on the Horizon: Advancing Science, Navigating
  Uncertainty, and Aligning Research with Public Values}.
\newblock The National Academies Press, Washington, DC, 2016.

\bibitem{noble_evolutionary_2017}
Charleston Noble, Jason Olejarz, Kevin~M. Esvelt, George~M. Church, and
  Martin~A. Nowak.
\newblock Evolutionary dynamics of {CRISPR} gene drives.
\newblock {\em Science Advances}, 3(4):e1601964, April 2017.

\bibitem{north_modelling_2013}
Ace North, Austin Burt, and H.~Charles~J. Godfray.
\newblock Modelling the spatial spread of a homing endonuclease gene in a
  mosquito population.
\newblock {\em Journal of Applied Ecology}, 50(5):1216--1225, October 2013.

\bibitem{north_modelling_2019}
Ace~R. North, Austin Burt, and H.~Charles~J. Godfray.
\newblock Modelling the potential of genetic control of malaria mosquitoes at
  national scale.
\newblock {\em BMC Biology}, 17(1):26, December 2019.

\bibitem{Rode_Debarre_2020}
Nicolas~O. Rode, Virginie Courtier-Orgogozo, and Florence D{\'e}barre.
\newblock Can a population targeted by a {CRISPR}-based homing gene drive be
  rescued?
\newblock {\em G3: Genes, Genomes, Genetics}, 10(9):3403--3415, 2020.

\bibitem{Roques_Bonnefon_2016}
Lionel Roques and Olivier Bonnefon.
\newblock Modelling population dynamics in realistic landscapes with linear
  elements: A mechanistic-statistical reaction-diffusion approach.
\newblock {\em PLOS ONE}, 11(3):1--20, 03 2016.

\bibitem{sinkins_gene_2006}
Steven~P. Sinkins and Fred Gould.
\newblock Gene drive systems for insect disease vectors.
\newblock {\em Nature Reviews Genetics}, 7(6):427--435, June 2006.

\bibitem{StrugarekPhD}
Martin Strugarek.
\newblock {\em {Mathematical modeling of population dynamics, applications to
  vector control of {A}edes spp. (Diptera:Culicidae)}}.
\newblock Theses, {Sorbonne Universit{\'e} , UPMC}, September 2018.

\bibitem{Strugarek_Vauchelet}
Martin Strugarek and Nicolas Vauchelet.
\newblock Reduction to a single closed equation for 2-by-2 reaction-diffusion
  systems of {L}otka-{V}olterra type.
\newblock {\em SIAM J. Appl. Math.}, 76(5):2060--2080, 2016.

\bibitem{Tanaka_Stone_N}
Hidenori Tanaka, Howard~A. Stone, and David~R. Nelson.
\newblock Spatial gene drives and pushed genetic waves.
\newblock {\em Proceedings of the National Academy of Sciences},
  114(32):8452--8457, 2017.

\bibitem{Unckless_2017}
Robert~L. Unckless, Andrew~G. Clark, and Philipp~W. Messer.
\newblock Evolution of resistance against crispr/cas9 gene drive.
\newblock {\em Genetics}, 205(2):827--841, 2017.

\bibitem{Unckless_2015}
Robert~L. Unckless, Philipp~W. Messer, Tim Connallon, and Andrew~G. Clark.
\newblock Modeling the manipulation of natural populations by the mutagenic
  chain reaction.
\newblock {\em Genetics}, 201(2):425--431, 2015.

\bibitem{Vella_2017}
Michael~R. Vella, Christian~E. Gunning, Alun~L. Lloyd, and Fred Gould.
\newblock Evaluating strategies for reversing {CRISPR-Cas9} gene drives.
\newblock {\em Scientific Reports}, 7(1):11038, 2017.

\bibitem{werren_selfish_2011}
J.~H. Werren.
\newblock Selfish genetic elements, genetic conflict, and evolutionary
  innovation.
\newblock {\em Proceedings of the National Academy of Sciences},
  108(Supplement\_2):10863--10870, June 2011.

\bibitem{White_Rohani_Sait_2010}
Steven~M. White, Pejman Rohani, and Steven~M. Sait.
\newblock Modelling pulsed releases for sterile insect techniques: fitness
  costs of sterile and transgenic males and the effects on mosquito dynamics.
\newblock {\em Journal of Applied Ecology}, 47(6):1329--1339, 2010.

\end{thebibliography}

\end{document}